\newcommand{\g}{\mbox{${\mathfrak g}$}}
\newcommand{\kf}{\mbox{${\mathfrak k}$}}
\newcommand{\p}{\mbox{${\mathfrak p}$}}
\newcommand{\C}{\mbox{${\mathbb C}$}}
\newcommand{\HH}{\mbox{${\mathbb H}$}}
\newcommand{\I}{\mbox{${\mathbb I}$}}
\newcommand{\PP}{\mbox{${\mathbb P}$}}
\newcommand{\R}{\mbox{${\mathbb R}$}}
\newcommand{\tr}{{\rm tr}}
\newcommand{\ric}{{\rm Ric}}
\def\numberwithin#1#2{\@ifundefined{c@#1}{\@nocnterrr}{%
  \@ifundefined{c@#2}{\@nocnterr}{%
  \@addtoreset{#1}{#2}%
  \toks@\expandafter\expandafter\expandafter{\csname the#1\endcsname}%
  \expandafter\xdef\csname the#1\endcsname
    {\expandafter\noexpand\csname the#2\endcsname
     .\the\toks@}}}}
\numberwithin{equation}{section}
\newtheorem{thm}[equation]{Theorem}
\newtheorem{lemma}[equation]{Lemma}
\newtheorem{prop}[equation]{Proposition}
\newtheorem{cor}[equation]{Corollary}
\newtheorem{ex}[equation]{Example}
\newtheorem{rem}[equation]{Remark}
\newenvironment{rmk}{\begin{rem} \em}{\end{rem}}
\begin{document}

\title{A Family of Steady Ricci Solitons and Ricci-flat Metrics}
\author{M. Buzano}
\address{Department of Mathematics and Statistics, McMaster
University, Hamilton, Ontario, L8S 4K1, Canada}
\email{maria.buzano@gmail.com}
\author{A. S. Dancer}
\address{Jesus College, Oxford University, OX1 3DW, United Kingdom}
\email{dancer@maths.ox.ac.uk}
\author{M. Gallaugher}
\address{Department of Mathematics and Statistics, McMaster
University, Hamilton, Ontario, L8S 4K1, Canada}
\email{gallaump@mcmaster.ca}
\author{M. Wang}
\address{Department of Mathematics and Statistics, McMaster
University, Hamilton, Ontario, L8S 4K1, Canada}
\email{wang@mcmaster.ca}
\thanks{M. Wang is partially supported by NSERC Grant No. OPG0009421}

\date{revised \today}

\begin{abstract}
We produce new non-K\"ahler complete steady gradient Ricci solitons
whose asymptotics combine those of the Bryant solitons and the
Hamilton cigar. We also obtain a family of complete Ricci-flat metrics
with asymptotically locally conical asymptotics. Finally, we obtain
numerical evidence for complete steady soliton structures
on the vector bundles whose distance sphere bundles are respectively
the twistor and ${\rm Sp}(1)$ bundles over quaternionic projective space.
\end{abstract}

\maketitle

\noindent{{\it Mathematics Subject Classification} (2000):
53C25, 53C44}

\bigskip
\setcounter{section}{-1}

\section{\bf Introduction}

In this article we continue the study of Ricci solitons using
 methods of dynamical systems, focusing on the
case of steady solitons. A Ricci soliton consists of
a complete Riemannian metric $g$ and a (complete) vector field $X$
satisfying the equation:
\begin{equation} \label{LieRS}
{\ric}(g) + \frac{1}{2} \,{\sf L}_{X} g + \frac{\epsilon}{2} \, g = 0
\end{equation}
where $\epsilon$ is a real constant and $\sf L$ denotes the Lie derivative.
The soliton is {\em steady} if $\epsilon =0$, {\em expanding}
if $\epsilon >0$ and {\em shrinking} if $\epsilon < 0$.
If $X$ is Killing, then $g$ is Einstein and the soliton is
called {\em trivial}. By the work of Perelman \cite{Per}, nontrivial steady and
expanding solitons must be noncompact.

A Ricci soliton is of {\em gradient type} if the vector field $X$ is the
gradient of a globally defined smooth function, referred to as the
{\em  soliton potential}. Many examples of K\"ahler gradient Ricci solitons
of all three types exist in the literature, see, for example, \cite{Ko},
\cite{Cao}, \cite{FIK}, \cite{WZh}, \cite{ACGT}, \cite{PS}, and  \cite{DW1}.
By contrast, fewer non-K\"ahler gradient Ricci solitons are known.

In \cite{DW2} a family of non-K\"ahler steady gradient Ricci solitons was
constructed generalising the rotationally symmetric Bryant solitons \cite{Bry} on
$\R^n \, (n > 2)$ and Ivey's related examples \cite{Iv}. The manifolds in this family
consisted of warped products on an arbitrary number of Einstein factors with positive
Einstein constants, and they exhibited asymptotically paraboloid geometry, like the
examples of Bryant and Ivey. In particular, they gave examples of steady
gradient Ricci solitons in dimensions greater than three which are not rotationally
symmetric. For dimension $3$, Brendle has recently proved that the Bryant soliton is the
only non-trivial $\kappa$-noncollapsed steady gradient Ricci soliton \cite{Br1}.

In this paper we use methods similar to those in \cite{DW2}, but allow one
of the factors in the warped product to be a circle (hence with zero Einstein
constant). We obtain complete steady Ricci solitons whose asymptotics are a mixture
of the paraboloid Bryant asymptotics and the cylindrical asymptotics
of Hamilton's cigar soliton \cite{Ha1}. More precisely, the metric
on the circle factor is asymptotically constant while those
on the other factors asymptotically grow like the geodesic distance
coordinate. This type of asymptotics has been observed previously for
K\"ahler steady solitons, cf \cite{DW1}. The general results of
Buzano \cite{Buz} now allow us to dispense with some of the analysis of
\cite{DW2} concerning smoothness at the other end of the manifold,
where the circle collapses. It should be mentioned that the special case
in which there are two factors (including the circle factor) was discussed
in Ivey's Duke thesis and in \cite{Iv}.

For dimensions greater than three, Brendle has obtained an analogue of his
$3$-dimensional rigidity theorem for complete steady gradient Ricci solitons \cite{Br2}.
Under the hypotheses of positivity of sectional curvatures and of being
``asymptotically cylindrical", he proves that such a soliton must be the
Bryant soliton. We note that our examples always have some negative
sectional curvatures when the number of positive Einstein factors
is at least $2$, although the Ricci tensor is non-negative. As for the
asymptotically cylindrical property, our examples do satisfy the upper
and lower scalar curvature bounds, but not the stronger requirement
involving the Gromov-Hausdorff convergence of rescaled flows to shrinking
cylinders.

In addition we prove some general results about steady solitons of cohomogeneity
one type, including monotonicity and concavity results for the soliton
potential, and decay estimates for the ambient scalar curvature and the
mean curvature of the hypersurfaces.

We also study a family of solutions to our equations that yield
complete Ricci-flat metrics. These are related to examples of
B\"ohm \cite{Bo1} which are multiple warped products whose factors are
Einstein metrics of positive scalar curvature. In our examples, as in the
soliton case above, one of these factors is replaced by a circle.
The resulting equations are rather different in character from those
considered by B\"ohm, due to the fact that we no longer have a solution
representing a cone over a positive scalar curvature Einstein metric on the
hypersurface (which acts as an attractor for the B\"ohm system). Note
that the special case in which there are only two factors is explicitly
integrable, and was discussed in \cite{BB} and \cite{Be}. This special case
includes the Riemannian Schwarzschild metric.

Combining our construction with the work of Boyer, Galicki and Koll{\'a}r
on Einstein metrics on exotic spheres,  the work of K. Kawakubo and R. Schultz,
and the recent work of Hill, Hopkins and Ravenel, we deduce that in
all dimensions congruent to $3$ mod $4$ other than $3, 7, 15, 31, 63$ and
possibly $127$, there are homeomorphic but not diffeomorphic complete
non-compact Ricci-flat manifolds as well as steady gradient Ricci solitons
(cf Corollary \ref{diffstr}).

The above examples all fall within the class of multiple warped products
on Einstein factors with nonnegative Einstein constant.
The analysis of the dynamical system in such cases is aided by the
fact that the scalar curvature of the hypersurface is bounded below.
In the examples treated in \cite{DW2} and (in the Einstein case) \cite{Bo1},
the scalar curvature is in fact strictly positive. This is related to the fact
that the Lyapunov function defined in \cite{DW2} for a general
cohomogeneity one steady soliton system is in these cases actually a positive
definite quadratic form (up to an additive constant). This gives coercive
estimates on the flow which facilitate the analysis. In the examples of the
present paper, where one factor in the warped product is flat, the
Lyapunov is no longer definite, but it becomes definite upon restriction
to a subsystem of one dimension lower, and this turns out to be enough
for many of the arguments to go through.

It is therefore of great interest to consider the soliton equation in cases
where the hypersurfaces have more complicated scalar curvature. One natural
class of examples are hypersurfaces which are the total spaces of Riemannian
submersions for which the hypersurface metric involves two functions, one
scaling the base and one the fibre of the submersion. If the fibre is a circle
this leads us to ans\"atze familiar in the Einstein case from the work of
Calabi and B\'erard Bergery. In the soliton case, K\"ahler examples are known
(see the references above), although the general case is still not well-understood.
For higher-dimensional fibres the equations are more complicated. In the Einstein case
B\"ohm obtained existence results for compact examples in some low dimensions \cite{Bo2}.

In the last section we discuss some results from a numerical study of the steady soliton
equation for these more complicated principal orbit types. Viewing the quaternionic
projective space as a quaternionic K\"ahler manifold, we take its twistor bundle
(resp. canonical ${\rm Sp}(1)$ bundle) as the principal orbits in the associated
$\R^3$ (resp. $\R^4$) bundle over $\HH\PP^m, m \geq 1$. We produce numerical evidence
of complete steady gradient Ricci soliton structures on these bundles. Note that
the existence of complete Ricci-flat metrics on these bundles, including ones
with special holonomy, was considered by \cite{BryS}, \cite{GPP} and \cite{Bo1}.
In the soliton case, however, we did not detect any difference between low and
high dimensional cases.

\section{\bf Generalities on cohomogeneity one steady solitons}

In \cite{DW1} two of the authors set up the formalism for Ricci solitons of cohomogeneity one.
More precisely, we considered the situation of a manifold $M$ with an open dense set
foliated by equidistant diffeomorphic hypersurfaces $P_t$ of real dimension $n$.
In other words, the metric is taken to be of the form $\bar{g}=dt^2 + g_t$ where $g_t$
is a metric on $P_t$ and $t$ is the arclength coordinate along a geodesic orthogonal
to the hypersurfaces. This formalism is somewhat more general than the cohomogeneity one
ansatz, as it allows us to consider metrics with little or no symmetry provided that
appropriate additional conditions on $P_t$ are satisfied, see the following as well as
Remarks 2.18 and 3.18 in \cite{DW1}.

We shall consider solitons of {\em gradient type}, that is, we take $X = {\rm grad} \; u$
for a function $u$. Equation (\ref{LieRS}) then becomes
\begin{equation} \label{gradRS}
{\ric}(\bar{g}) + {\rm Hess}(u) + \frac{\epsilon}{2} \, \bar{g} = 0.
\end{equation}
We will further suppose, in line with the cohomogeneity one formalism, that
$u$ is a function of $t$ only, and treat it as both a smooth function on
the manifold and a function of the single variable $t$.

We let $r_t$ denote the Ricci tensor of $g_t$, viewed as an endomorphism via $g_t$.
Then we can define $L_t$, the shape operator of the hypersurfaces, by the equation
$\dot{g_t} = 2 g_t L_t$. We assume that the scalar curvature $S_t=\tr(r_t)$ and the
mean curvature $\tr(L_t)$ (with respect to the normal $\nu=\frac{\partial}{\partial t}$)
are constant on each hypersurface. We shall often in the future suppress the $t$-dependence
in the above tensors.

In this setting, the above equation becomes the system (cf \S 1 of \cite{DW1})
\begin{eqnarray}
  -{\rm tr} (\dot{L}) - {\rm tr} (L^2) + \ddot{u} + \frac{\epsilon}{2} &=& 0,  \label{TT} \\
    r - ({\rm tr}\, L)L - \dot{L} + \dot{u} L + \frac{\epsilon}{2} \, \I &=& 0, \label{SS}\\
d ({\rm tr} L) + \delta^{\nabla} L &=& 0.  \label{TS}
\end{eqnarray}
The first two equations represent the components of the equation
in the $\frac{\partial}{\partial t}$ direction and in the directions
tangent to $P$, respectively. Also, $\delta^{\nabla} L$ denotes the
codifferential for $TP$-valued $1$-forms, and the third equation represents
the equation in mixed directions.

The above assumptions are satisfied, for example, if $M$ is
of cohomogeneity one with respect to an isometric Lie group action.
They are satisfied also when $M$ is a multiple warped product over an
interval, which is the situation we focus on in this paper.

In the warped product case the final equation involving the
codifferential automatically holds. This is also true for
cohomogeneity one metrics that are {\em monotypic}, i.e. when there
are no repeated irreducible summands in the isotropy representation
of the principal orbits (cf \cite{BB}, Prop. 3.18).

We have a conservation law
\begin{equation} \label{cons1}
\ddot{u} + (-\dot{u} + {\rm tr}\, L) \dot{u}  -\epsilon u = C
\end{equation}
for some constant $C$.
Using the equations this may be rewritten as
\begin{equation}
{\rm tr} (r_t) + {\rm tr} (L^2) - (\dot{u} - {\rm tr}\, L )^2 - \epsilon u +
\frac{1}{2}(n-1) \epsilon = C.
\end{equation}

The term ${\rm tr} (r_t)$ is the scalar curvature $S$ of the principal
orbits. Recall that if $\bar{R}$ denotes the scalar curvature of the
ambient metric, from now on written as $\bar{g} = dt^2 + g_t$, then
$$ \bar{R} = -2 {\rm tr} (\dot{L}) - {\rm tr} (L^2) - ({\rm tr} L)^2 +S. $$
We can deduce the equality
\begin{equation}\label{ham}
\bar{R} + \dot{u}^2 + \epsilon u = -C -\frac{\epsilon}{2}(n+1),
\end{equation}
which is just the cohomogeneity one version of Hamilton's
identity $\bar{R} + |\nabla u|^2 + \epsilon u =\;$ constant.

\smallskip

We now specialise to the case of {\em steady solitons}, that is, $\epsilon =0$.
The conservation law is now
\begin{equation} \label{cons2}
{\rm tr} (r_t) + {\rm tr} (L^2) - (\dot{u} - {\rm tr}\,  L)^2 = C.
\end{equation}

In \cite{DW2} the following result was proved.
\begin{prop}
The function $(\dot{u} - {\rm tr}\, L)^{-2}$
is a Lyapunov function, that is, it is monotonic on each interval on which it is defined.
\end{prop}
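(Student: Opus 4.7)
The plan is to differentiate the quantity $\phi := \dot{u} - \tr L$ and show that its derivative has a definite sign, which then forces monotonicity of $\phi^{-2}$ on any interval where $\phi \neq 0$.

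First I would compute $\dot{\phi} = \ddot{u} - \tr(\dot{L})$. Specialising the trace equation (\ref{TT}) to the steady case $\epsilon = 0$ gives $\ddot{u} = \tr(\dot{L}) + \tr(L^2)$, so substitution yields
\begin{equation*}
\dot{\phi} \;=\; \tr(L^2) \;\geq\; 0.
\end{equation*}
Thus $\phi$ is non-decreasing along the flow, and in fact strictly increasing unless the shape operator vanishes identically.

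Next, by definition $\phi^{-2}$ is defined precisely on intervals where $\phi$ does not vanish. Since $\phi$ is continuous and monotone, on any such interval $\phi$ has constant sign, so $\phi^{-3}$ is well-defined and of constant sign there. Differentiating gives
\begin{equation*}
\frac{d}{dt}\bigl(\phi^{-2}\bigr) \;=\; -2\,\phi^{-3}\,\dot{\phi} \;=\; -2\,\phi^{-3}\,\tr(L^2),
\end{equation*}
whose sign is constant on the interval (non-positive if $\phi>0$, non-negative if $\phi<0$). This gives the required monotonicity.

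There is essentially no serious obstacle here; the only subtlety worth flagging is that monotonicity is asserted interval by interval, so one must be careful to restrict to connected components of $\{\phi \neq 0\}$ — the function $\phi^{-2}$ blows up at zeros of $\phi$ and no global monotonicity is claimed across such singularities. The computation itself is a one-line consequence of the steady version of (\ref{TT}).
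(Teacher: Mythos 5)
Your proof is correct and is essentially the argument the paper relies on (the proposition is quoted from \cite{DW2}, and the key identity you derive, $\dot{\xi}=-\tr(L^2)\le 0$ from the steady case of (\ref{TT}), is exactly the computation the paper itself uses later in the proof of Proposition \ref{steady-MC} and in equation (\ref{eqnLyap})). The sign analysis of $-2\phi^{-3}\tr(L^2)$ on each connected component of $\{\phi\neq 0\}$ is the standard conclusion, so there is nothing to add.
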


\medskip
The expression $-\dot{u} + {\rm tr} \, L$ is the soliton version of the
hypersurface mean curvature. It occurs so frequently in our analysis
that we shall introduce some special notation for it
\[
\xi := -\dot{u} + {\rm tr} \, L.
\]

\begin{rmk}
The conservation law (\ref{cons2}) shows that our Lyapunov function is a constant multiple
of
\[
\frac{{\rm tr}(r_t) + {\rm tr} (L^2)}{(\dot{u} - {\rm tr}\; L)^2}  -1.
\]
\end{rmk}
It is convenient to
define a function ${\mathcal L } = \frac{C}{\xi^2}$ and we refer to this
as {\em the} Lyapunov.

It is often useful to define a new independent variable $s$ by
\begin{equation} \label{st}
\frac{d}{ds} := \frac{1}{\xi} \frac{d}{dt} =
\sqrt{\frac{\mathcal L}{C}} \frac{d}{dt}
\end{equation}
and use a prime to denote $\frac{d}{ds}$. We shall see presently that $\xi$ is always
positive in the case of a complete steady soliton.
 Another useful quantity is
\[
{\mathcal H} = \frac{{\rm tr} L}{\xi} = 1 + \frac{\dot{u}}{\xi}
= 1 +u^\prime,
\]
which was introduced in \cite{DW3}, \cite{DHW}.

In the steady case the locus $\{ {\mathcal L} =0, {\mathcal H}=1 \}$
is invariant under the flow, and trajectories in this region of
phase space correspond to trivial solitons, i.e., ones where
$u$ is constant and $g$ is an Einstein metric. Analogous
statements hold in the expanding and shrinking cases if we modify
$\mathcal L$ appropriately. We refer the reader to \cite{DW3}
and \cite{DHW} for further discussion.

\bigskip

We now describe some general results about complete cohomogeneity one steady
solitons of gradient type. Some of these results can be deduced from theorems
about general solitons found in e.g., \cite{FR}, \cite{MS}, \cite{Wu}, \cite{WeiWu}.
However, in the cohomogeneity one situation, the statements sometimes take on a
stronger or more precise form, which will be useful for checking asymptotic
behaviour in numerical studies. We have also included their proofs here. Besides
being more elementary, they involve ideas which are useful for analysing
existence questions in the cohomogeneity one case.

We shall be looking at complete noncompact steady solitons with one special
orbit, in which case  we may assume, without loss of generality, that $t \in [0, \infty)$
and the special orbit occurs at $t=0$. We let $k$ denote the dimension of the
collapsing sphere at $t=0$.

The equation (\ref{ham}) becomes
\begin{equation} \label{ham2}
\bar{R} + (\dot{u})^2 = -C
\end{equation}
for complete steady solitons. We recall the result of B. L. Chen \cite{Chen}
that $\bar{R} \geq 0$ for complete steady solitons, with equality iff $\bar{g}$
is Ricci-flat. Hence we deduce the important inequality
\[
C < 0
\]
for non-trivial steady solitons.
Note that this is a global consequence of completeness which does not follow
from examining local existence in some neighbourhood of a singular orbit
(cf. \cite{Buz}). The specific value of $C$ is unimportant as it can be changed
by a positive multiple via a homothety of the soliton metric.

\begin{prop} \label{steady-pot}
The soliton potential is strictly decreasing and strictly concave
on $(0, \infty)$.
\end{prop}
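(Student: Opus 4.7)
The plan is to base both conclusions on the first-order identity obtained by setting $\epsilon = 0$ in the conservation law (\ref{cons1}):
\[
\ddot{u} + \xi\,\dot{u} = C,
\]
together with the boundary data at $t=0$ dictated by smoothness of $u$ at the special orbit, namely $\dot{u}(0) = 0$ and hence $\ddot{u}(0) = C < 0$. In particular both $\dot u$ and $\ddot u$ are strictly negative on some initial interval $(0,\delta)$.

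For strict monotonicity I would run a first-vanishing argument. If $t_1$ is the smallest $t > 0$ with $\dot{u}(t_1) = 0$, then $\dot{u}$ reaches $0$ from below, so $\ddot{u}(t_1) \geq 0$; but the displayed ODE evaluated at $t_1$ forces $\ddot{u}(t_1) = C < 0$, a contradiction. Hence $\dot{u} < 0$ on all of $(0,\infty)$.

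For strict concavity I would differentiate the identity once more. A direct computation from $\xi = -\dot{u} + {\rm tr}\, L$ and equation (\ref{TT}) yields $\dot{\xi} = -{\rm tr}(L^2)$, and substituting gives
\[
\dddot{u} = {\rm tr}(L^2)\,\dot{u} - \xi\,\ddot{u}.
\]
Let $t_2$ be the smallest $t > 0$ with $\ddot{u}(t_2) = 0$; then $\dddot{u}(t_2) \geq 0$, while the right-hand side at $t_2$ collapses to ${\rm tr}(L^2)(t_2)\,\dot{u}(t_2)$, which is $\leq 0$ by the previous step.

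The main obstacle is the borderline case where both signs are compatible, namely $L(t_2) = 0$. I would treat this separately by feeding back into the original ODE: then ${\rm tr}\, L(t_2) = 0$, so $\xi(t_2) = -\dot{u}(t_2)$ and the ODE yields $\dot{u}(t_2)^2 = -C$. Hamilton's identity (\ref{ham2}) now forces $\bar{R}(t_2) = 0$, and Chen's rigidity theorem upgrades this to $\bar{R} \equiv 0$ and Ricci-flatness of $\bar{g}$; the soliton equation then gives ${\rm Hess}(u) \equiv 0$, so $\nabla u$ is parallel, and since $\dot{u}(0) = 0$ this forces $u$ to be constant, contradicting $\ddot{u}(0) = C < 0$. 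So no such $t_2$ exists and $\ddot{u} < 0$ throughout $(0, \infty)$.
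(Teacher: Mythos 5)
Your argument follows the same route as the paper's own proof: strict monotonicity from the conservation law $\ddot u + \xi\dot u = C$ with $C<0$ via a first-zero argument on $\dot u$; strict concavity by differentiating that identity (using $\dot\xi = -\tr(L^2)$, which comes from (\ref{TT})) and running a first-critical-point argument on $\ddot u$, with the borderline case $L(t_2)=0$ eliminated through Hamilton's identity (\ref{ham2}) and Chen's theorem. That part is sound and matches the paper almost line for line.

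The one step that fails as written is the evaluation at $t=0$. You cannot conclude $\ddot u(0) = C$ by substituting $\dot u(0)=0$ into $\ddot u + \xi\dot u = C$, because $\xi$ is singular at the special orbit: smoothness gives $\xi = \frac{k}{t} + O(t)$ while $\dot u = \ddot u(0)\,t + O(t^2)$, so the product $\xi\dot u$ tends to $k\,\ddot u(0)$ rather than to $0$. The correct relation is $(k+1)\,\ddot u(0) = C$, as the paper derives. Since only the sign of $\ddot u(0)$ is used downstream and $C/(k+1) < 0$ still holds, the rest of your proof survives once this limit is computed correctly, but as stated the step is not valid.
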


\begin{proof}
Let $t_0$ be a critical point of $u$ in $(0, \infty)$. The
conservation law (\ref{cons1}) with $\epsilon =0$, together with the
negativity of $C$, show that $u$ is strictly concave in a neighbourhood of $t_0$.
So the critical points of $u$ are isolated and nondegenerate.

Next let $t_0 < t_1$ be two consecutive critical points. The concavity
statement means there must be a critical point between $t_0$ and $t_1$, a
contradiction. So if a critical point exists it is unique.

The smoothness conditions at the special orbit imply
$\dot{u}(0)=0$ and $\xi = \frac{k}{t} + O(t)$ near $t=0$. Substituting into
(\ref{cons1}) yields $(k+1) \ddot{u}(0) = C<0$ so in fact we
have concavity at $t=0$ also.
Hence the above argument shows there are in fact no critical points
of $u$ in $(0, \infty)$ and $u$ is hence strictly decreasing.

Now set $y = \dot{u}$ and differentiate (\ref{cons1}); using (\ref{TT})
we obtain
\[
\ddot{y} + \xi \dot{y} - {\rm tr}(L^2) y =0.
\]
We know $y \leq 0$ on $[0, \infty)$ with equality only at $0$. Also
$\dot{y}(0)<0$ and $y^2 < -C$. If $t_0 >0$ is a critical point of $y$, then
$\ddot{y}(t_0) \leq 0$ with equality only if $L(t_0)=0$. But this
implies, by (\ref{cons1}), that $-\dot{u}(t_0)^2 =C$, which contradicts
positivity of $\bar{R}$. Hence $y$ is strictly concave at its critical points.
Looking at the first critical point and using the above information on $y$
now shows no critical points can exist. So $\ddot{u} = \dot{y}$ is negative
for all  $t>0$.
\end{proof}

\begin{prop} \label{steady-MC}
The mean curvature ${\rm tr} \; L$ is strictly decreasing
and satisfies $0 < {\rm tr} \; L \leq \frac{n}{t}$. The generalised
mean curvature $\xi$ is strictly decreasing and tends to $\sqrt{-C}$
as $t$ tends to $\infty$.
\end{prop}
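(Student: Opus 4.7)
My plan is to derive a Riccati-type inequality for $h := {\rm tr}\, L$ from the equations (\ref{TT})--(\ref{SS}), from which both the upper bound and the monotonicity will follow, and then to control the asymptotics of $\xi$ using the conservation law together with the facts $\ddot{u} < 0$ and $\dot{u} < 0$ established in Proposition \ref{steady-pot}.

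First I would combine (\ref{TT}) with $\dot h = {\rm tr}(\dot L)$ to obtain the clean identities
\[
\dot h = \ddot u - {\rm tr}(L^2), \qquad \dot\xi = \dot h - \ddot u = -{\rm tr}(L^2).
\]
Since $\ddot u < 0$ for $t > 0$, this immediately shows that $h$ is strictly decreasing, while $\xi$ is non-increasing. Cauchy--Schwarz gives ${\rm tr}(L^2) \geq h^2/n$, so $\dot h \leq -h^2/n$, and standard Riccati comparison with the initial blow-up $h \sim k/t$ at $t = 0^+$ (together with $k \leq n$) integrates to $h(t) \leq n/t$ on $(0, \infty)$.

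The main obstacle is showing strict positivity of $h$. The idea is that the very same Riccati inequality acts as a blow-up mechanism for negative $h$. If $h(t^*) = 0$ at some first point $t^* > 0$, then $\dot h(t^*) = \ddot u(t^*) - {\rm tr}(L^2)(t^*) < 0$, so $h < 0$ just after $t^*$; setting $g = -h > 0$, the inequality $\dot g \geq g^2/n$ forces $g$, and hence $|h|$, to diverge in finite time. This contradicts the smoothness of $L$ on $[0, \infty)$, so $h > 0$ throughout. Combined with $-\dot u > 0$ this gives $\xi = h - \dot u > 0$, and since ${\rm tr}(L^2) \geq h^2/n > 0$, $\xi$ is in fact strictly decreasing.

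It remains to identify the limit of $\xi$. The strict monotonicity and lower bound $\xi > 0$ imply $\xi \to \xi_\infty \geq 0$, and integrating $\dot\xi = -{\rm tr}(L^2)$ shows ${\rm tr}(L^2)$, hence $h^2$, is integrable at infinity; combined with $h$ strictly decreasing and positive, this forces $h \to 0$, so $\xi_\infty = -\dot u_\infty$. By Hamilton's identity (\ref{ham2}) and the monotonicity of $\dot u$, we have $\dot u \to \dot u_\infty \in [-\sqrt{-C}, 0]$; then $\ddot u = C - \xi \dot u$ also has a limit, and since $\ddot u$ is integrable and strictly negative that limit must be $0$. Passing to the limit in the conservation law $\ddot u + \xi \dot u = C$ gives $\xi_\infty \dot u_\infty = C$, i.e.\ $-\dot u_\infty^2 = C$, hence $\xi_\infty = \sqrt{-C}$.
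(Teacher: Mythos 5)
Your proof is correct, and for the one genuinely delicate step --- the strict positivity of ${\rm tr}\,L$ --- you take a different and more self-contained route than the paper. Both arguments begin identically: $\frac{d}{dt}({\rm tr}\,L) = \ddot{u} - {\rm tr}(L^2) < 0$ gives strict monotonicity, Cauchy--Schwarz gives the Riccati inequality, and integrating it against the blow-up ${\rm tr}\,L \sim k/t$ gives the bound $n/t$; the identification of $\lim \xi = \sqrt{-C}$ via the conservation law at the end is also essentially the paper's argument (the paper gets ${\rm tr}\,L \to 0$ from the $n/t$ bound rather than from integrability of ${\rm tr}(L^2)$, but this is cosmetic). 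Where you diverge is in ruling out a zero of ${\rm tr}\,L$: you observe that once ${\rm tr}\,L$ becomes negative, the same Riccati inequality $\frac{d}{dt}({\rm tr}\,L) \leq -({\rm tr}\,L)^2/n$ forces ${\rm tr}\,L \to -\infty$ in finite time, contradicting smoothness of the shape operator on all of $(0,\infty)$, whereas the paper integrates $\frac{d}{dt}({\rm tr}\,L) \leq \ddot{u}$ past the putative zero to show $\dot{v}/v$ is eventually bounded above by a negative constant, hence the soliton has finite volume, contradicting Theorem 1.11 of Munteanu--Sesum. Your blow-up argument is more elementary in that it needs no external input beyond the equations and completeness of the $t$-interval; the paper's volume argument is softer (it would survive a weaker differential inequality, needing only $\frac{d}{dt}({\rm tr}\,L) \leq \ddot{u}$ and the limit of $\dot{u}$, not the full quadratic decay) and ties the statement to the general structure theory of steady solitons. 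Both are valid proofs of the proposition.
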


\begin{proof}
The preceding proposition shows that $\frac{d}{dt}({\rm tr} \; L) =
-{\rm tr} (L^2) + \ddot{u}$ is negative. By Cauchy-Schwartz, we have
$\frac{d}{dt} ({\rm tr} \; L) < -\frac{1}{n}( {\rm tr} \; L)^2$.

Suppose ${\rm tr} \; L$ vanishes at $t_0$. Then ${\rm tr} \; L < 0$ on
$(t_0, \infty)$.
Integrating the inequality $\frac{d}{dt}({\rm tr} \; L) \leq \ddot{u}$
from $t_0 + \delta$ to $t$, where $\delta>0$, yields
\[
\frac{\dot{v}}{v} = ({\rm tr}\; L)(t) \leq ({\rm tr}\; L)(t_0 + \delta)
+ \dot{u}(t) - \dot{u}(t_0 + \delta) < \dot{u}(t) - \dot{u}(t_0 + \delta)
\]
where $v(t)$ is the volume of the metric $g_t$ relative to a fixed
invariant background metric on the principal orbit.
Since $\dot{u}$ is strictly decreasing, and bounded by (\ref{ham2}),
it tends to a negative constant $-a$ as $t$ tends to $\infty$. For sufficiently
large $t$ we may assume that
\[
\dot{u}(t) + a < \frac{1}{2}( \dot{u}(t_0 + \delta) +a ) := \alpha
\]
So $\frac{\dot{v}}{v}$ is less than the negative constant $-\alpha$, which
implies the metric has finite volume, a contradiction to Theorem 1.11 in
\cite{MS}.

It follows that $\tr \; L$ never vanishes and hence is positive
everywhere, since it tends to $+ \infty$ at $t=0$. Now for $0 < \eta < T$
we have
\[
\int_{t= \eta}^{t= T}  \frac{d({\rm tr}\; L)}{({\rm tr}\; L)^2}
< -\frac{1}{n} \int_{t =\eta}^{t = T} dt.
\]
Therefore
\[
\frac{1}{({\rm tr}\; L)(\eta)} - \frac{1}{({\rm tr}\; L)(T)}
< -\frac{1}{n} (T - \eta)
\]
which, on letting $\eta \rightarrow 0,$ gives us our claimed upper bound
on ${\rm tr} \; L$.

As $\dot{u}$ is bounded below and decreasing,
$\lim_{t \rightarrow \infty} \dot{u}\, {\rm tr} \, L =0$. Now the conservation law
(\ref{cons1}) implies that $\lim_{t \rightarrow \infty} \ddot{u}$ exists. Since
$\ddot{u} < 0$,  the boundedness of $\dot{u}$ implies that
 $\lim_{t \rightarrow \infty} \ddot{u} = 0$.
The conservation law then yields $a = \sqrt{-C}$.

Finally, by Eq. (\ref{TT}), $\dot{\xi} = -\tr (L^2) =-\tr ((L^{(0)})^2) -\frac{1}{n} (\tr \,L)^2 < 0$
since we have shown that $\tr \;L > 0$. The limiting value of $\xi$ is then that of $-\dot{u},$
i.e., $\sqrt{-C},$ as $\tr \;L$ tends to $0$.
\end{proof}

\begin{rmk}
We have shown for complete steady gradient Ricci solitons of cohomogeneity one
with a special orbit at one end that $\dot{u}$ tends to a negative
constant and $\ddot{u}$ tends to $0$ as $t$ tends to $\infty$. So the soliton
potential $u$ will have asymptotically linear behaviour. In numerical searches
it is of course not possible to generate solutions over an infinite interval,
so the asymptotic behaviour of quantities such as the soliton potential provides
a valuable check that a soliton has in fact been found numerically. We shall consider
numerical examples in \S 5.
\end{rmk}

\begin{cor}
The ambient scalar curvature $\bar{R}$ is decreasing and tends to zero
as $t$ tends to $\infty$. Furthermore, we have
$$ 0 < -\dot{u}\,\tr \,L < \bar{R} < 2\sqrt{-C}\left(\frac{n}{t}\right) + \frac{n^2}{t^2}. $$
\end{cor}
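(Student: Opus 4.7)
The plan is to package everything around the Hamilton-type identity (\ref{ham2}),
\[
\bar{R} = -C - \dot{u}^2,
\]
together with the monotonicity facts already recorded in Propositions \ref{steady-pot} and \ref{steady-MC}. For the first assertion, observe that Proposition \ref{steady-pot} gives $\dot{u} < 0$ and $\ddot{u} < 0$ on $(0,\infty)$, so $\dot{u}^2$ is strictly increasing and hence $\bar{R}$ is strictly decreasing. Since Proposition \ref{steady-MC} establishes $\dot{u} \to -\sqrt{-C}$ as $t \to \infty$, we obtain $\bar{R} \to 0$.

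For the lower bound, I would combine (\ref{ham2}) with the conservation law (\ref{cons1}) (with $\epsilon=0$) to eliminate the constant $C$. Writing $C = -\bar{R} - \dot{u}^2$ and substituting into $\ddot{u} + (-\dot{u} + \tr L)\dot{u} = C$ yields
\[
-\dot{u}\,\tr L \;=\; \bar{R} + \ddot{u}.
\]
Since $\ddot{u} < 0$ while $\dot{u} < 0$ and $\tr L > 0$ on $(0,\infty)$ by the preceding propositions, both inequalities $0 < -\dot{u}\,\tr L < \bar{R}$ follow at once.

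For the upper bound the key trick is to use the substitution $\dot{u} = \tr L - \xi$, which recasts (\ref{ham2}) as
\[
\bar{R} \;=\; \bigl(-C - \xi^2\bigr) + 2\xi\,\tr L - (\tr L)^2.
\]
Proposition \ref{steady-MC} says $\xi$ strictly decreases to $\sqrt{-C}$, so the first parenthesis is strictly negative and $\bar{R} < 2\xi\,\tr L - (\tr L)^2$. Bounding $\xi = -\dot{u} + \tr L$ via $-\dot{u} < \sqrt{-C}$ (which itself follows from Chen's theorem $\bar{R} > 0$ combined with (\ref{ham2})) and then applying $\tr L \leq n/t$ from Proposition \ref{steady-MC} gives the chain
\[
\bar{R} \;<\; 2\xi\,\tr L - (\tr L)^2 \;<\; 2\sqrt{-C}\,\tr L + (\tr L)^2 \;\leq\; 2\sqrt{-C}\,\frac{n}{t} + \frac{n^2}{t^2}.
\]
There is no real obstacle here: the argument is an algebraic consequence of (\ref{ham2}), the conservation law, and the monotonicity statements already in hand. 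The only point requiring care is to verify that strictness propagates through each step so that the final estimate remains a strict inequality, which it does because both $\xi > \sqrt{-C}$ and $-\dot{u} < \sqrt{-C}$ are strict on $(0,\infty)$.
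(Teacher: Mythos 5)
Your proposal is correct and follows essentially the same route as the paper: the monotonicity and limit of $\bar{R}$ come from (\ref{ham2}) together with Propositions \ref{steady-pot} and \ref{steady-MC}, and your lower-bound identity $\bar{R} = -\ddot{u} - \dot{u}\,\tr L$ is exactly what the paper extracts from $\frac{d}{dt}\bar{R} = -2\dot{u}\ddot{u} = 2\dot{u}(\bar{R} + \dot{u}\,\tr L)$. The only (cosmetic) difference is in the upper bound, where you expand (\ref{ham2}) via $\dot{u} = \tr L - \xi$ and invoke $\xi^2 > -C$ directly, while the paper reaches the same intermediate inequality $\bar{R} < (\tr L)(\tr L - 2\dot{u})$ by combining (\ref{cons2}) with the trace of (\ref{SS}); both hinge on the identical input $\xi^2 + C > 0$ from Proposition \ref{steady-MC}.
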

\begin{proof}
By (\ref{ham2}), $\frac{d}{dt} \bar{R} = -2 \dot{u} \ddot{u} <0$.
The limiting value follows from the above proposition and (\ref{ham2}).

Next, using (\ref{cons1}) followed by (\ref{ham2}), we obtain
$\frac{d}{dt} \bar{R} = 2 \dot{u} (\bar{R} +  \dot{u}\, \tr \, L). $
Since $\frac{d}{dt} \bar{R} < 0$ and $\dot{u} < 0$, we deduce the lower bound.
For the upper bound, note that by the conservation law (\ref{cons2}) and
Proposition \ref{steady-MC}, we have $S + \tr(L^2) = \xi^2 +C > 0$.  On the
other hand, from the trace of Eq. (\ref{SS}) and  (\ref{cons2}) we have
$$S + \tr(L^2) = -\bar{R} + (\tr \, L)^2 - 2\dot{u}\, \tr\, L.$$
Therefore, using Proposition \ref{steady-MC} again, we deduce that
$$ \bar{R} < (\tr \,L)(\tr \,L - 2\dot{u}) \leq \frac{n}{t} \left(\frac{n}{t} + 2 \sqrt{-C}\right).$$
\end{proof}

\begin{rmk}
Note that from the limiting values of $\dot{u}$ and $\ddot{u}$ we also get
$\lim_{t \rightarrow \infty} \frac{d}{dt} \bar{R} = 0.$
The asymptotic behaviour of $\bar{R}$ for general steady solitons is given by
Theorem 3.4 in \cite{FR}, from which the asymptotic value of $\bar{R}$ also follows.
The upper bound for $\bar{R}$ above is somewhat stronger than what can be deduced from
the general upper bound given in Corollary 1.3 of \cite{Wu}. The upper bound
shows that for $t \geq 1$, $\bar{R} < (2\sqrt{-C}+n)\,\frac{n}{t}$, which is
independent of the principal orbit. It is unclear whether/when $\tr \; L$ has an
asymptotic lower bound of the form $\frac{\rm const}{t}$. This is an interesting
question, however, in view of the hypotheses in Brendle's rigidity result \cite{Br2}.
\end{rmk}



Finally, we discuss another Lyapunov function, a modification of which will play
an important role in \S 4. This function, denoted by ${\mathscr F}_0$ below, was first
considered  by C. B\"ohm in \cite{Bo1} for the Einstein case and was subsequently
studied in \cite{DHW} for the soliton case.

\begin{cor} \label{steady-F}
Let ${\mathscr F}_0$ denote the function
$v^{\frac{2}{n}}\left( S + \tr( (L^{(0)})^2 ) \right)$ defined on the velocity phase space
of the cohomogeneity one gradient Ricci soliton equations, where $L^{(0)}$ is the trace-free part of $L$.
Then ${\mathscr F}_0$ is non-increasing along the trajectory of a complete non-trivial steady soliton.
Furthermore, the function ${\mathcal F} :=v^{\frac{2}{n}}\left( S + \tr( L^2 ) \right)$ is strictly
decreasing along such a trajectory.
\end{cor}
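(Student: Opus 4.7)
My plan is to use the steady conservation law (\ref{cons2}), $S + \tr(L^2) = \xi^2 + C$, to simplify $\mathcal{F}$ at once to
\[
\mathcal{F} \;=\; v^{2/n}(\xi^2 + C),
\]
so the differentiation involves only the scalar quantities $v$, $\xi$ and $\mu := \tr L$. Since $\dot{v}/v = \mu$, and since combining (\ref{TT}) in the steady case with $\dot{\xi} = -\ddot{u} + \dot{\mu}$ gives $\dot{\xi} = -\tr(L^2)$, a direct calculation yields
\[
\dot{\mathcal{F}} \;=\; \frac{2v^{2/n}}{n}\bigl[\mu(\xi^2 + C) - n\xi\tr(L^2)\bigr].
\]

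Next I would split off the trace-free part via $\tr(L^2) = \tr((L^{(0)})^2) + \mu^2/n$. The $\mu^2/n$ contribution combines with $\mu(\xi^2 + C)$ as
\[
\mu(\xi^2 + C) - \xi\mu^2 \;=\; \mu\xi(\xi - \mu) + \mu C \;=\; -\mu\xi\dot{u} + \mu C,
\]
and the steady conservation law (\ref{cons1}), in the form $\ddot{u} + \xi\dot{u} = C$, collapses this to $\mu\ddot{u}$. What remains is
\[
\dot{\mathcal{F}} \;=\; \frac{2v^{2/n}}{n}\bigl(\mu\ddot{u} - n\xi\tr((L^{(0)})^2)\bigr).
\]
By Propositions \ref{steady-pot} and \ref{steady-MC} we have $\mu > 0$, $\xi > 0$ and $\ddot{u} < 0$ on $(0,\infty)$, so both terms in the bracket are non-positive and the first is strictly negative; this gives the strict decrease of $\mathcal{F}$.

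For $\mathscr{F}_0 = \mathcal{F} - v^{2/n}\mu^2/n$ I would differentiate the correction term using $\dot{\mu} = \ddot{u} - \tr(L^2)$ from (\ref{TT}), together with the same trace-free decomposition. The $\mu\ddot{u}$ contributions cancel, and one is left with
\[
\dot{\mathscr{F}}_0 \;=\; \frac{2v^{2/n}}{n}\,\tr((L^{(0)})^2)\,(\mu - n\xi).
\]
Since $\xi - \mu = -\dot{u} > 0$ on $(0,\infty)$ by Proposition \ref{steady-pot}, one has $\mu - n\xi < 0$ for $n \geq 1$, and as $\tr((L^{(0)})^2) \geq 0$ this yields $\dot{\mathscr{F}}_0 \leq 0$, the claimed monotonicity.

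The only non-routine obstacle I anticipate is spotting the cancellation in the first computation: the two mixed terms $-\mu\xi\dot{u}$ and $\mu C$ arising from isolating $\mu^2/n$ conspire, via (\ref{cons1}), to produce the single term $\mu\ddot{u}$. After that, the whole argument reduces to sign bookkeeping against the qualitative information on $\mu$, $\xi$, $\dot{u}$ and $\ddot{u}$ already furnished by Propositions \ref{steady-pot} and \ref{steady-MC}. Note that $\dot{\mathscr{F}}_0$ vanishes exactly when the principal orbit is umbilic ($L^{(0)}\equiv 0$), which explains why $\mathscr{F}_0$ is only non-increasing while the extra $\mu\ddot{u}$ term forces $\mathcal{F}$ to decrease strictly.
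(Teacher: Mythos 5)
Your proof is correct, and it reaches exactly the paper's key identity $\dot{\mathscr F}_0 = -2v^{2/n}\,\tr((L^{(0)})^2)\bigl(\xi - \tfrac{1}{n}\tr L\bigr)$, but by a somewhat different and more self-contained route. The paper simply quotes this formula from Proposition 2.17 of \cite{DHW}, proves $\xi - \tfrac{1}{n}\tr L > 0$ from Propositions \ref{steady-pot} and \ref{steady-MC}, and then handles $\mathcal F$ by differentiating the correction term $v^{2/n}(\tr L)^2/n$ directly, invoking (\ref{TT}) and Cauchy--Schwartz to get $\frac{d}{dt}\bigl(v^{2/n}(\tr L)^2\bigr) \leq 2v^{2/n}(\tr L)\,\ddot u$. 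You work in the opposite order: you first compute $\dot{\mathcal F}$ exactly by using the conservation law (\ref{cons2}) to write $\mathcal F = v^{2/n}(\xi^2 + C)$, together with $\dot v/v = \tr L$ and $\dot\xi = -\tr(L^2)$, and the cancellation via (\ref{cons1}) that produces the clean expression $\dot{\mathcal F} = \frac{2}{n}v^{2/n}\bigl(\mu\ddot u - n\xi\,\tr((L^{(0)})^2)\bigr)$; subtracting the correction term then yields the \cite{DHW} formula for $\dot{\mathscr F}_0$ as a byproduct. The sign bookkeeping at the end (using $\tr L>0$, $\xi>0$, $\dot u<0$, $\ddot u<0$ from Propositions \ref{steady-pot} and \ref{steady-MC}) is the same in both arguments, and your observation that $\dot{\mathscr F}_0=0$ exactly at umbilic points matches the paper's remark. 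What your version buys is independence from the external reference and an exact formula for $\dot{\mathcal F}$ rather than just an upper bound; what it costs is nothing beyond the (routine) verification that the Cauchy--Schwartz step in the paper is equivalent to your $\tr((L^{(0)})^2)\geq 0$.
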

\begin{proof} By Proposition 2.17 in \cite{DHW}, we have the formula
\begin{equation} \label{Fdot}
 \dot{\mathscr F}_0 = -2 v^{\frac{2}{n}} \ \tr((L^{(0)})^2) \left(\xi - \frac{1}{n} \tr L \right).
\end{equation}
However, by Propositions \ref{steady-pot} and \ref{steady-MC},
$$ \xi - \frac{1}{n} \tr \,L = -\dot{u} + \left(1 - \frac{1}{n} \right) \tr \, L > 0.$$
So ${\mathscr F}_0$ is strictly decreasing except where $L$ is a multiple of the identity.

As for the second statement, since $\tr( L^2) = \tr( (L^{(0)})^2 ) + \frac{1}{n} (\tr L)^2$,
it suffices to examine
\begin{eqnarray*}
      \frac{d}{dt} \left( v^{\frac{2}{n}} (\tr L)^2 \right) & = & \left(\frac{2}{n}\right) v^{\frac{2}{n}} (\tr L)^3
               + 2 v^{\frac{2}{n}} (\tr L)(\tr L)^{\cdot} \\
           & = & 2 v^{\frac{2}{n}} (\tr L) \left( \frac{1}{n}(\tr L)^2 - \tr(L^2) + \ddot{u} \right) \\
           & \leq & 2 v^{\frac{2}{n}} (\tr L)\, \ddot{u},
\end{eqnarray*}
where we have used (\ref{TT}) and the Cauchy-Schwartz inequality. By Propositions \ref{steady-pot}
and \ref{steady-MC} the last quantity is negative along the trajectory.
\end{proof}

\section{\bf Multiple warped products}
We now specialise to multiple warped products, that is
metrics of the form
\begin{equation} \label{metric}
dt^2 + \sum_{i=1}^{r} g_i^2(t)\,  h_i
\end{equation}
on $I \times M_1 \times \cdots \times M_r$\, where $I$ is an interval in
$\mathbb R,$ $r \geq 2,$ and $(M_i, h_i)$ are Einstein manifolds
with real dimensions $d_i$ and Einstein constants $\lambda_i$. Note that
$n=\sum_i d_i \geq 3$ once some $M_i$ is non-flat.

The shape operator and Ricci endomorphism are now given by
\begin{eqnarray*}
L &=& {\rm diag} \left( \frac{\dot{g_1}}{g_1} \,\I_{d_1}, \cdots,\frac{\dot{g_r}}{g_r} \,\I_{d_r}\right)  \\
r &=& {\rm diag} \left( \frac{\lambda_1}{g_1^2} \, \I_{d_1}, \cdots,\frac{\lambda_r}{g_r^2}\, \I_{d_r}\right)
\end{eqnarray*}
where $\I_m$ denotes the identity matrix of size $m$.
As in \cite{DHW}, we work with the variables
\begin{eqnarray}
X_i &=& \frac{\sqrt{d_i}}{\xi} \frac{\dot{g_i}}{g_i} \label{def-Xi}\\
Y_i &=& \frac{\sqrt{d_i}}{\xi} \frac{1}{ g_i} \label{def-Yi}
\end{eqnarray}
for $i=1, \ldots, r$.
Notice that the definition of $Y_i$ in \cite{DW2} and \cite{DW3} differs from ours
by a scale factor of $\sqrt{\lambda_i}$; the new choice is more appropriate
to our current situation where one of the $\lambda_i$ may be zero.
Now
\[
\sum_{j=1}^{r} \, X_j^2  = \frac{{\rm tr} (L^2)}{\xi^2} \quad\textup{and}\quad
\sum_{j=1}^{r} \, \lambda_j Y_j^2 = \frac{ {\rm tr} (r_t)}{\xi^2}.
\]
So the Lyapunov function becomes
\begin{equation} \label{Lyap}
{\mathcal L} := \frac{C}{\xi^2} = \sum_{i=1}^{r} \, (X_i^2 + \lambda_i Y_i^2)  - 1
\end{equation}
where $C$ is a nonzero constant.

As mentioned above, we introduce the new coordinate $s$ defined by
(\ref{st}) and use
 a prime ${ }^{\prime}$ to denote differentiation with respect to $s$.

In our new variables the Ricci soliton system (\ref{TT})-(\ref{SS}) with $\epsilon=0$
becomes
\begin{eqnarray}
X_{i}^{\prime} &=& X_i \left( \sum_{j=1}^{r} X_j^2 -1 \right) +
\frac{\lambda_i Y_i^2}{\sqrt{d_i}} \, , \label{eqnX} \\
Y_{i}^{\prime} &=&  Y_i \left( \sum_{j=1}^{r} X_j^2 -\frac{X_i}{\sqrt{d_i}}
\right) \label{eqnY}
\end{eqnarray}
for $i=1, \ldots, r$. Note that homothetic solutions of the system (\ref{TT})-(\ref{TS})
give rise to the same solution of the above system.

We shall be concerned exclusively with the multiple warped situation
for the rest of the paper. Recall that in this case equation (\ref{TS})
is automatically satisfied. Note also that the above equations imply the equation
\begin{equation} \label{eqnLyap}
{\mathcal L}^{\prime} = 2 {\mathcal L} \left( \sum_{i=1}^{r} X_i^2 \right),
\end{equation}
so ${\mathcal L} =0$ is flow-invariant.
We also use the notation ${\mathcal G} := \sum_{i=1}^{r} X_i^2$ as this
quantity often occurs in our calculations.

The quantity ${\mathcal H} = \frac{{\rm tr} L}{\xi}$ becomes
$\sum_{i=1}^{r} \sqrt{d_i} X_i$ in our new variables.
We have the equation
\[
({\mathcal H} - 1)^\prime = ({\mathcal H} -1 )({\mathcal G}-1) + \mathcal L
\]
so, as mentioned above, we see that the region
$\{ {\mathcal L} =0, {\mathcal H} =1 \}$ of phase space
corresponding to Ricci-flat metrics is flow-invariant.

While in \cite{DW2} all $\lambda_i$ were taken to be positive, that is, the
Einstein constants on each $M_i$ were positive, we shall now look at the
case where the collapsing factor $M_1$ is $S^1$, so $d_1=1$, $\lambda_1=0,$
and the remaining $\lambda_i$ are positive. Then the equation for $X_1$ becomes:
\[
X_{1}^{\prime} = X_1 \left( \sum_{j=1}^{r} X_j^2 -1 \right).
\]
Note in particular this means the locus $X_1 =0$ is flow-invariant.

Conversely, if we have a solution of the above system
(\ref{eqnX}), (\ref{eqnY}) with $\lambda_1=0$, in the
region ${\mathcal L} <0$ (so $C<0$), we may recover $t$ and the
metric components $g_i$ from
\begin{equation} \label{def-tgi}
dt = \sqrt{\frac{\mathcal L}{C}}\,\, ds, \ \ \ \ \
g_i = \frac{\sqrt{d_i}}{Y_i} \, \sqrt{\frac{\mathcal L}{C}}\,\,.
\end{equation}
We can choose $t=0$ to correspond to $s = -\infty$.

The soliton potential is recovered from integrating
\begin{equation} \label{def-u}
 \dot{u} = \tr(L) - \sqrt{\frac{C}{\mathcal L}} =
\sqrt{\frac{C}{\mathcal L}}\, ({\mathcal H} - 1),
\end{equation}
and $\tr(L)$ is calculated using
\begin{equation} \label{logdiff-gi}
 \frac{\dot{g_i}}{g_i} = \sqrt{\frac{C}{\mathcal L}} \frac{X_i}{\sqrt{d_i}}\, .
\end{equation}

The following lemma is a routine calculation.
\begin{lemma} \label{zeros}
Let $d_1 =1$ and $d_i > 1$ for $i > 1$, so that
$\lambda_i =0$ iff $i=1$. The stationary points of $($\ref{eqnX}$)$, $($\ref{eqnY}$)$
are now:
\begin{enumerate}
\item[$($i$)$] the origin

\item[$($ii$)$] points with $Y_i=0$ for all $i$, and $\sum_{i=1}^{r} X_i^2 =1$

\item[$($iii$)$] points given by
\[
X_i = \sqrt{d_i}\, \rho_A \;\; : \;\; Y_i^2 = \frac{d_i}{\lambda_i}\,\, \rho_A (1- \rho_A),
\;\;\;  i \in A
\]
and $X_i = Y_i =0$ for $i \notin A$, where $A$ is any nonempty subset
of $\{2, \ldots, r \}$, and $\rho_A = \left( \sum_{j \in A} d_j \right)^{-1}$

\item[$($iv$)$] the line where $X_i=0$ for all $i$ and $Y_i=0$ for $i >1$

\item[$($v$)$] the line where $X_1 =1$ and $X_i, Y_i =0$ for $i > 1$.

\end{enumerate}
\end{lemma}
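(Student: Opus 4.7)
The statement is a direct algebraic classification of the simultaneous zeros of the right-hand sides of (\ref{eqnX}) and (\ref{eqnY}). My plan is to organize the case analysis around the auxiliary quantity $\mathcal G = \sum_j X_j^2$ together with the set $A = \{i : Y_i \neq 0\}$, exploiting the two special features of the index $i=1$, namely $d_1 = 1$ and $\lambda_1 = 0$.

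First I would extract from $Y_i' = Y_i(\mathcal G - X_i/\sqrt{d_i}) = 0$ the dichotomy $Y_i = 0$ or $X_i = \sqrt{d_i}\,\mathcal G$. Substituting the latter into $X_i' = 0$ for $i \geq 2$ (where $\lambda_i > 0$) yields
\[
Y_i^2 = \frac{d_i\,\mathcal G(1-\mathcal G)}{\lambda_i},
\]
which forces $\mathcal G \in [0,1]$ whenever $A \cap \{2, \ldots, r\}$ is nonempty. For the index $i = 1$, the equation $X_1' = X_1(\mathcal G - 1) = 0$ is independent of $Y_1$ (because $\lambda_1 = 0$), so it only imposes $X_1 = 0$ or $\mathcal G = 1$; this decoupling is precisely what will allow $Y_1$ to appear as a free parameter in cases (iv) and (v).

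Next I would split on the value of $\mathcal G$. The range $\mathcal G > 1$ is empty: the displayed identity forces $A \cap \{2,\ldots,r\} = \emptyset$, hence $X_i = 0$ for $i \geq 2$ from $X_i' = 0$, reducing to $X_1^2 = \mathcal G > 1$, which conflicts with $X_1(\mathcal G - 1) = 0$. The value $\mathcal G = 0$ gives all $X_i = 0$; the equations $X_i' = 0$ for $i \geq 2$ then kill every $Y_i$ with $i \geq 2$, while $Y_1$ is unconstrained, producing (iv) (which absorbs the origin (i) at $Y_1 = 0$). For $\mathcal G = 1$, if $Y_1 \neq 0$ one obtains $X_1 = 1$ and then $X_j = Y_j = 0$ for $j > 1$, giving (v); while if $Y_1 = 0$, the identity $Y_j^2 = 0$ for $j \in A$ forces $A = \emptyset$, yielding the sphere (ii). Finally, for $\mathcal G \in (0,1)$ the relation $X_1(\mathcal G - 1) = 0$ forces $X_1 = 0$ and thence $Y_1 = 0$ (since $\mathcal G \neq X_1$); indices $i \geq 2$ outside $A$ also satisfy $X_i = 0$; and for $i \in A \subseteq \{2,\ldots,r\}$ the self-consistency $\mathcal G = \sum_{i \in A} X_i^2 = \mathcal G^2 \sum_{i \in A} d_i$ pins down $\mathcal G = \rho_A$, recovering (iii).

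The calculation is routine; the only real subtlety is the decoupling of $Y_1$ from every $X_j'$ equation caused by $\lambda_1 = 0$, which is what promotes (iv) and (v) from isolated points to entire lines in phase space. Keeping track of this degeneracy, and separating the boundary values $\mathcal G \in \{0,1\}$ from the generic case $\mathcal G \in (0,1)$, is the main piece of bookkeeping to handle.
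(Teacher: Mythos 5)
Your case analysis is correct and complete: the paper itself gives no argument, dismissing the lemma as ``a routine calculation,'' and your splitting on $\mathcal G=\sum_j X_j^2$ and the support set $A=\{i: Y_i\neq 0\}$ is exactly that routine verification, carried out correctly (including the key observation that $\lambda_1=0$ decouples $Y_1$ and turns (iv) and (v) into lines). The only cosmetic point is that when $A\cap\{2,\dots,r\}\neq\emptyset$ the relation $Y_i^2=d_i\,\mathcal G(1-\mathcal G)/\lambda_i>0$ forces $\mathcal G\in(0,1)$ rather than $[0,1]$, but your subsequent treatment of the endpoints $\mathcal G\in\{0,1\}$ handles this correctly, so nothing is lost.
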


Note that $\mathcal L$ equals $-1$ in case (i) and (iv), and equals $0$ in case
(ii), (iii) and (v).   Cases (iv) and (v) are special to the case $d_1=1$
and mean that in this situation the origin is a non-isolated critical
point.

\section{\bf Soliton solutions}

As in \cite{DW2}, we shall construct complete non-compact steady
soliton metrics where one factor $M_1$ collapses at one end,
corresponding to $t=0$.  For the collapse to be smooth we take
$M_1$ to be a sphere $S^{d_1}$. (Note that $d_1$ is the same as the dimension $k$
in \S 1.) The manifold underlying the Ricci soliton is then the total space of
a trivial vector bundle of rank $d_1 + 1$ over
$M_2 \times \cdots \times M_r$.  In our case, of course, $d_1=1$.
The initial conditions for the soliton solution
to be $C^2$ are the existence of the following limits:
\begin{equation} \label{bdy0}
g_1(0)=0   \;\; : \;\; g_i(0) = l_i \neq 0 \; (i > 1),
\end{equation}

\begin{equation} \label{bdy1}
\dot{g_1}(0)=1 \;\; : \;\; \dot{g_i}(0) = 0 \; (i > 1),
\end{equation}

\begin{equation} \label{bdy2}
\ddot{g}_{1}(0) =0 \;\; : \;\; \ddot{g_i}(0) \;\, {\rm  finite} \;
(i > 1),
\end{equation}

\begin{equation} \label{bdyu}
u(0) \, \,  {\rm finite}: \, \, \dot{u}(0)=0 \, \, : \,\ddot{u}(0) \, \, {\rm \, finite}.
\end{equation}

In our $X_i, Y_i$ variables, this means we consider trajectories in
the unstable manifold of the critical point $P_0$ of (\ref{eqnX})
and (\ref{eqnY}) given by
\[
X_1 = 1, \;\; Y_1 = 1, \;\; X_i =Y_i=0  \,\, (i > 1).
\]
This critical point lies on the level set ${\mathcal L}=0$ of the Lyapunov.

The linearisation about this critical point is the system
\begin{eqnarray*}
x_1^{\prime} &=& 2x_1 \\
y_1^\prime &=& x_1 \\
x_i^\prime &=& 0 \;\;\; (i \geq 2) \\
y_i^\prime &=& y_i \;\; (i \geq 2)
\end{eqnarray*}
with eigenvalues
$2$,  $1$ ($r-1$ times), and $0$ ($r$ times).

\medskip
In contrast to the situation of \cite{DW2} we now have a centre manifold.

\medskip
The results of \cite{Buz} now show we have an $(r-1)$-parameter family
of trajectories $\gamma(s)$ such that $\lim_{s \rightarrow -\infty} \gamma(s)
= P_0$ and pointing into the region ${\mathcal L} < 0$. As in
\cite{DW2}, (\ref{eqnLyap}) shows that such trajectories stay in
${\mathcal L} < 0$. We can moreover choose the trajectories to have $Y_i >0$
for all time (note that the locus $Y_i =0$ is always invariant under the flow).

Because $d_1=1$ and hence $\lambda_1 =0$, the Lyapunov
${\mathcal L} = \sum_{i=1}^{r} X_i^2  + \sum_{i=2}^{r} \lambda_i Y_i^2$
does not involve $Y_1$, so the region ${\mathcal L} \leq 0$ is no
longer compact, in contrast to the situation in \cite{DW2}.

However, since $\lambda_1=0$, the variable $Y_1$ only enters
into the equations through the equation for $Y_1^\prime$. Hence
by omitting (\ref{eqnY}) for $i=1$ we obtain a subsystem
of (\ref{eqnX}), (\ref{eqnY}) for $X_i \,(i=1, \ldots, r)$ and
$Y_i \,(i=2, \ldots, r)$ and on this new space, ${\mathcal L} \leq 0$
is compact.  Moreover, once we have a solution to the subsystem
we can recover $Y_1$ via
\[
Y_{1}(s) = Y_{1}(s_0) \exp \left(\int_{s_0}^{s} \sum_{j=1}^{r} X_j^2 -X_1
\right).
\]
The critical points for the subsystem are given by cases (i), (ii)
and (iii) of the critical points for the full system.
In particular $P_0$ corresponds to the critical point $\hat{P}_0 =
(1,0,\ldots, 0)$ in the subsystem, and we have an $r-1$ parameter
family of solutions emanating from this point and lying in the
region $Y_i > 0$ and ${\mathcal L} < 0$.

\medskip
Let us now analyse these trajectories in ${\mathcal L} < 0$.
For the subsystem, where this region is precompact, all
the variables are bounded by $1$ and the flow exists for all $s$.
Hence this is true for the original flow also.

The arguments of Prop 3.7 of \cite{DW2} show that the flow in the subsystem
converges to the origin, so ${\mathcal L}$ converges to $-1$. We deduce
from (\ref{st}) that as $s$ tends to $\infty$, so does $t$, hence the metric
is complete.

The proof of Lemma 4.4 (i) in \cite{DW2} carries over to show that
all $X_i$ are positive on the trajectory.
Using the arguments of Lemma 3.8 in \cite{DW2} we can show, using the equation
\[
\left(\frac{X_i}{Y_i^2} \right)^\prime = \left(\frac{X_i}{Y_i^2} \right)
\left(-1 - {\mathcal G} + \frac{2X_i}{\sqrt{d_i}} \right) + \frac{\lambda_i}{\sqrt{d_i}}\;,
\]
the following result.
\begin{lemma}
We have $\lim_{s \rightarrow \infty} \frac{X_i}{Y_i^2} = \frac{\lambda_i}{\sqrt{d_i}}$ for $i \geq 2$. \qed
\end{lemma}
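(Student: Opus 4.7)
The idea is to view the displayed ODE as an asymptotically autonomous linear scalar equation
$$f_i' = a_i(s)\, f_i + \frac{\lambda_i}{\sqrt{d_i}}, \qquad f_i := \frac{X_i}{Y_i^2}, \qquad a_i(s) := -1 - \mathcal{G}(s) + \frac{2 X_i(s)}{\sqrt{d_i}},$$
and to show that $f_i$ converges to the equilibrium of the limiting constant-coefficient equation, namely $\lambda_i/\sqrt{d_i}$. The required asymptotic input is already in hand: the convergence of the subsystem trajectory to the origin (established just before the lemma via the arguments of Proposition 3.7 of \cite{DW2}) gives $X_j(s) \to 0$ for every $j$ and $Y_j(s) \to 0$ for $j \geq 2$, hence $\mathcal{G}(s) \to 0$, so that $a_i(s) \to -1$.

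To make this rigorous, I would set $\phi_i := f_i - \lambda_i/\sqrt{d_i}$, so that
$$\phi_i' = a_i(s)\, \phi_i + g_i(s), \qquad g_i(s) := \frac{\lambda_i}{\sqrt{d_i}}\bigl(a_i(s)+1\bigr) = \frac{\lambda_i}{\sqrt{d_i}}\left(-\mathcal{G}(s) + \frac{2X_i(s)}{\sqrt{d_i}}\right),$$
with $g_i(s) \to 0$. A standard trapping argument then yields $\phi_i \to 0$. Given $\varepsilon > 0$, choose $s_0$ so that $a_i(s) \leq -\tfrac{1}{2}$ and $|g_i(s)| \leq \varepsilon/2$ for all $s \geq s_0$. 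At any $s \geq s_0$ with $\phi_i(s) \geq \varepsilon$ one has $\phi_i'(s) \leq -\phi_i(s)/2 + \varepsilon/2 \leq 0$, strictly negative when $\phi_i(s) > \varepsilon$; symmetrically $\phi_i'(s) \geq 0$ whenever $\phi_i(s) \leq -\varepsilon$. Hence $\phi_i$ cannot leave the strip $[-\varepsilon,\varepsilon]$ once inside it, and if it starts outside then the strict sign of $\phi_i'$ drives it into the strip in finite time. As $\varepsilon$ is arbitrary, $\phi_i(s) \to 0$, which is the claim.

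The point to be alert to is that $f_i = X_i/Y_i^2$ is a priori not controlled at infinity: for $i \geq 2$ both numerator and denominator tend to zero, so the limit of $f_i$ cannot be read off directly from the known behaviour of $X_i$ and $Y_i$ alone. The trapping argument handles this automatically because the dissipative coefficient $a_i(s) \to -1$ provides the restoring force that bounds $\phi_i$ in terms of the shrinking inhomogeneity $g_i$; this is really where the main work of the lemma lies. One also needs $f_i$ to be defined along the whole trajectory, which holds because the locus $Y_i = 0$ is flow-invariant under (\ref{eqnY}) and the trajectories under consideration were chosen with $Y_i > 0$ throughout.
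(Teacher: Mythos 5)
Your argument is correct and is essentially the paper's own route: the paper's proof consists precisely of the displayed Riccati-type equation for $(X_i/Y_i^2)^\prime$ together with an appeal to the arguments of Lemma 3.8 of \cite{DW2}, which amount to the same asymptotically autonomous trapping argument you write out, using that the subsystem trajectory converges to the origin so that the coefficient tends to $-1$ while the inhomogeneity tends to $\lambda_i/\sqrt{d_i}$. The only cosmetic point is that when $\phi_i$ starts outside the strip your Gronwall estimate yields $\limsup_{s\to\infty}|\phi_i(s)|\leq\varepsilon$ rather than entry into the strip in finite time, but that already suffices since $\varepsilon$ is arbitrary.
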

(Recall that the $Y_i$ in the current paper differ from those in
\cite{DW2} by a $\sqrt{\lambda_i}$ scale factor.)

\medskip

Now
\[
\frac{1}{2} \frac{d}{dt} (g_i^2) = g_i \dot{g_i} = \frac{d_i}{Y_i^2}
\frac{\mathcal L}{C} \frac{X_i}{\sqrt{d_i}}
\sqrt{\frac{C}{\mathcal L} } =
\frac{\sqrt{d_i} X_i}{Y_i^2} \sqrt{ \frac{\mathcal L}{C}}
\rightarrow \frac{\lambda_i}{\sqrt{|C|}}
\]
as $s$ tends to $\infty$. Hence we deduce that as $t$ tends to $\infty$,
$g_i^2$ to leading order asymptotically behaves like
$ \frac{2 \lambda_i t}{\sqrt{-C}} $ for $i > 1$.

For $i=1$, we again use the equation
\[
\left(\frac{X_1}{Y_1^2} \right)^\prime = \left(\frac{X_1}{Y_1^2} \right)
\left(-1 - {\mathcal G} + 2X_1 \right),
\]
which we may write in the form
\[
(\log \psi)^{\prime} = - 1 + \phi
\]
where $\psi = \frac{X_1}{Y_1^2}$ and $\phi$ tends to $0$ as $s$ tends
to $\infty$. Choosing $0< \delta <1$ and $s_0$ such that $|\phi(s)| < \delta$ for
$s > s_0$, we integrate and obtain
\[
-(1 + \delta)(s - s_0) < \log \frac{\psi(s)}{\psi(s_0)} <
(-1 + \delta)(s-s_0)
\]
for $s > s_0$. Exponentiating gives
\[
e^{-(1 + \delta)(s-s_0)} < \frac{\psi(s)}{\psi(s_0)}
< e^{-(1 - \delta)(s-s_0)}
\]
so $\psi = \frac{X_1}{Y_1^2}$
decays to zero as $s \rightarrow \infty$.
Now, as in the $i>1$ case, we have
\[
\frac{1}{2} \frac{d}{dt} (g_1^2) \; dt=
\frac{X_1}{Y_1^2} \sqrt{ \frac{\mathcal L}{C}} \; dt =
\frac{X_1}{Y_1^2}  \frac{\mathcal L}{C} \; ds,
\]
and this integrand is positive and dominated by
$\frac{1}{|C|} \frac{X_1}{Y_1^2}.$ Integrating
and using the exponential bound above shows that the
increasing function $g_1^2$ is bounded above, hence converges
to a positive limit ${\alpha}^2$.

\begin{rmk} \label{Y1value}
Since $g_1(t)$ tends to $\alpha$ and $\xi$ tends to $\sqrt{-C}$ as $t$ tends
to infinity, it follows from (\ref{def-Yi}) that $Y_1$ tends to a positive
constant as $s$ tends to infinity. This means that the soliton trajectory
in the full $X_i, Y_i$ space tends to one of the stationary points
of type (iv) (lying in ${\mathcal L} < 0$) in Lemma \ref{zeros}.

\end{rmk}

We have therefore deduced the following theorem.
\begin{thm} \label{asymptotics}
The metric corresponding to our trajectory has the form, to leading order in $t$
as $t \rightarrow +\infty$,
\[
dt^2 + {\alpha}^2 d \theta^2 + t \; {\sf h}_{\infty}
\]
where $\alpha$ is a positive constant, $\theta$ is the angle coordinate
on $M_1 = S^1$ and ${\sf h}_{\infty}$ is the product
Einstein metric on $M_2 \times \cdots \times M_r$.
The volume growth is asymptotically $t^{\frac{n+1}{2}}$. \qed
\end{thm}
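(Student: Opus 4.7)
The plan is to read off the asymptotics of each warping function from the analysis immediately preceding the theorem and then assemble them into the asserted form. For $i \geq 2$, the lemma $\lim_{s \to \infty} X_i/Y_i^2 = \lambda_i/\sqrt{d_i}$ combined with the displayed identity $\frac{1}{2}\frac{d}{dt}(g_i^2) = \frac{\sqrt{d_i}\, X_i}{Y_i^2}\sqrt{\mathcal{L}/C}$ and with $\mathcal{L} \to -1$ as $s \to \infty$ yields $\frac{d}{dt}(g_i^2) \to 2\lambda_i/\sqrt{-C}$, so an elementary l'H\^opital-type argument gives $g_i^2(t) \sim 2\lambda_i t/\sqrt{-C}$ as $t \to \infty$. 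For $i=1$, the exponential bound $X_1/Y_1^2 = O(e^{-(1-\delta)s})$ established just above forces the monotone increasing function $g_1^2$ to be bounded, hence to converge to some positive constant $\alpha^2$.

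Assembling these into the warped product (\ref{metric}), and using $M_1 = S^1$ with $h_1 = d\theta^2$, produces the leading-order expression
\[
dt^2 + \alpha^2\, d\theta^2 + t \sum_{i=2}^{r}\frac{2\lambda_i}{\sqrt{-C}}\, h_i.
\]
Setting ${\sf h}_\infty := \sum_{i=2}^{r}(2\lambda_i/\sqrt{-C})\, h_i$ gives the asserted form. To see that ${\sf h}_\infty$ is Einstein on $M_2\times\cdots\times M_r$, I would observe that rescaling $h_i$ by $c_i^2$ divides its Einstein constant by $c_i^2$; taking $c_i^2 = 2\lambda_i/\sqrt{-C}$ produces the common Einstein constant $\sqrt{-C}/2$ on every factor, so the product metric is indeed Einstein.

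For the volume growth, the volume of the geodesic ball of radius $t$ around the special orbit equals $\int_0^t \mathrm{vol}(P_\tau)\, d\tau$. Since $\mathrm{vol}(P_\tau) = g_1(\tau)\prod_{i\geq 2} g_i(\tau)^{d_i}$ times a constant depending only on the background volumes of the $(M_i,h_i)$, the asymptotics above give $\mathrm{vol}(P_\tau) \sim \mathrm{const}\cdot \tau^{(\sum_{i\geq 2}d_i)/2} = \mathrm{const}\cdot \tau^{(n-1)/2}$, since $d_1=1$ and $\sum_i d_i = n$. Integrating in $\tau$ yields volume growth of order $t^{(n+1)/2}$. The theorem is essentially combinatorial packaging of information already in hand; all of the hard dynamical-systems estimates on the subsystem and on $X_1/Y_1^2$ have been carried out beforehand, so there is no substantial obstacle to overcome.
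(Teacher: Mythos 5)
Your proposal is correct and follows essentially the same route as the paper: the theorem is stated with an immediate \qed precisely because it packages the computations carried out just before it (the limit of $\tfrac{1}{2}\tfrac{d}{dt}(g_i^2)$ for $i\geq 2$ via the lemma on $X_i/Y_i^2$, and the boundedness of the increasing function $g_1^2$ via the exponential decay of $X_1/Y_1^2$). Your explicit verification that the rescaled factors share the common Einstein constant $\sqrt{-C}/2$ and your volume-growth computation are both sound and merely make explicit what the paper leaves implicit.
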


\begin{rmk}
We thus obtain asymptotic behaviour which is a mixture of the
asymptotically paraboloid geometry of the Bryant solitons on $\mathbb R^n$
(for $n>2$) and the Hamilton-Witten cigar geometry on $\mathbb R^2$.
\end{rmk}

\begin{thm} \label{mainthm}
Let $M_2, \ldots, M_r$ be compact Einstein manifolds with positive scalar
curvature. There is an $r-1$ parameter family of non-homothetic complete smooth
steady Ricci solitons on the trivial  rank $2$ vector bundle
over $M_2 \times \ldots \times M_r$, with asymptotics given by
Theorem \ref{asymptotics}.  \qed
\end{thm}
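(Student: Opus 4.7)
The plan is to assemble the dynamical-systems results established in the preceding pages. First, I would invoke the local-to-global construction near the critical point $P_0$. The eigenvalue count $\{2,\,1^{(r-1)},\,0^{(r)}\}$ at $P_0$, together with the smoothness theorems of \cite{Buz}, supplies for each choice of $r-1$ parameters a trajectory $\gamma(s)$ of (\ref{eqnX}), (\ref{eqnY}) with $\gamma(s) \to P_0$ as $s \to -\infty$, lying in $\{{\mathcal L} < 0,\ Y_i > 0\}$, and corresponding via (\ref{def-tgi})--(\ref{logdiff-gi}) to warping functions $g_i(t)$ and potential $u(t)$ satisfying the smoothness conditions (\ref{bdy0})--(\ref{bdyu}) at $t = 0$; \cite{Buz} in fact guarantees full $C^\infty$ regularity at the special orbit, not merely $C^2$. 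Since $d_1 = 1$ and $M_1 = S^1$ collapses smoothly at $t = 0$ while the remaining factors retain positive size, the collapsing disc has rank $2$ and the underlying manifold is the trivial rank-$2$ vector bundle $\R^2 \times M_2 \times \cdots \times M_r$.

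Second, the global analysis already carried out shows that on the subsystem obtained by dropping the $Y_1$ equation, the region $\{{\mathcal L} \le 0\}$ is precompact, so $\gamma$ is defined for all $s \in \R$; the argument of Proposition 3.7 of \cite{DW2}, adapted above, then shows $\gamma$ converges to the origin in the subsystem, whence ${\mathcal L} \to -1$. By (\ref{st}), this yields $t \to +\infty$ as $s \to +\infty$, proving geodesic completeness, and the asymptotic form of the metric is precisely the content of Theorem \ref{asymptotics}, giving the mixed Hamilton-cigar/Bryant-paraboloid behaviour announced in the statement. For non-homothety, I would note that the variables $X_i, Y_i$ are manifestly invariant under a homothetic rescaling $\bar g \mapsto c^2 \bar g$ (this rescaling changes only the constant $C$ by a factor $c^{-2}$ while leaving the phase portrait untouched); hence two distinct trajectories within the $(r-1)$-parameter family necessarily correspond to non-homothetic solitons, and the parameters can be identified with the $r-1$ free asymptotic radii $l_i := g_i(0)$ of the noncollapsing factors.

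The main obstacle I anticipate is the smoothness bookkeeping at $t = 0$: the linearisation at $P_0$ has a non-trivial centre manifold (the $r$-fold zero eigenvalue, as noted in the preceding paragraphs), so one must invoke the full strength of \cite{Buz} to ensure that trajectories emanating from $P_0$ in the unstable directions extend to genuinely $C^\infty$ smooth metrics and potentials at the collapsing orbit, rather than only satisfying the finite-order boundary conditions (\ref{bdy0})--(\ref{bdyu}). Once this regularity is in hand, all remaining steps are direct consequences of the preceding propositions and of Theorem \ref{asymptotics}.
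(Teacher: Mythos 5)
Your proposal is correct and follows essentially the same route as the paper: Theorem \ref{mainthm} carries a \qed precisely because it is the assembly of the preceding material in \S 3 --- the local existence results of \cite{Buz} at $P_0$ (handling the centre manifold and full smoothness at the collapsing circle), precompactness of $\{{\mathcal L}\le 0\}$ for the subsystem omitting $Y_1$, convergence to the origin giving completeness, Theorem \ref{asymptotics} for the asymptotics, and the observation that homothetic solitons determine the same $(X_i,Y_i)$ trajectory, so distinct trajectories give non-homothetic solitons. The only cosmetic quibble is that the $l_i=g_i(0)$ are initial rather than asymptotic radii, but this does not affect the argument.
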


\begin{rmk} \label{ricci}
As with the metrics of \cite{DW2}, we can show that our soliton metrics
have nonnegative Ricci curvature.
The sectional curvatures decay like $\frac{1}{t}$
or faster, and the curvatures $K(U_i \wedge U_j)$ where $U_i, U_j$
are tangent to $M_i, M_j$ respectively with $i,j \geq 2$ and $i \neq j$
are negative. The scalar curvature decays like $\frac{1}{t},$
and  satisfies $\frac{c_1}{t} \leq \bar{R} \leq \frac{c_2}{t}$ for
certain positive constants $c_1, c_2$ and all sufficiently large $t$.
(These constants depend only on $n$ and $\sqrt{-C}$.)
In particular, the asymptotic scalar curvature ratio
$\limsup_{d \rightarrow +\infty} {\bar{R}} d^2$,  where $\bar{R}$ is the
scalar curvature and $d$ is the distance from a fixed origin
in the manifold, is $+\infty$, as it should be.
\end{rmk}

\section{\bf Complete Ricci-flat metrics}

As mentioned in the introduction, a special case of solutions to the soliton
equations is that of trivial solitons, where the metric is Einstein
and the potential is constant. In the steady case, this means the
metric is Ricci-flat.

In \cite{Bo1} B\"ohm constructed an $r-2$ parameter family of complete
Ricci-flat metrics using warped products over $r$ Einstein manifolds
with positive Einstein constants. He assumed in his construction that the collapsing
Einstein factor is a sphere of dimension at least $2$. In this section we
will remove this dimension restriction, i.e., we produce
analogues of these metrics in the case where $M_1 = S^1$ (so $d_1 =1$).
The special case of $r=2$ was treated in \cite{BB} (see also p. 271 of \cite{Be}),
where an explicit solution was found. It includes the Riemannian Schwarzschild
solution, which is the special case when $M_2 = S^2$.

We recall from \S 2 that for trajectories representing Ricci-flat metrics, we
have ${\mathcal L}=0$ and ${\mathcal H}=1$. Therefore we need to study
trajectories emanating from the critical point $P_0$ and lying in the locus
${\mathcal L} = 0$ rather than going into the region ${\mathcal L} < 0$.
These form an $r-2$ parameter family.

We note that as ${\mathcal L} =0$ we have to modify our procedure to recover the
metric from solutions to (\ref{eqnX}) and (\ref{eqnY}). We now define $t$ by
\begin{equation} \label{stE}
dt = \exp \left( \int_{s*}^{s} \sum_{j=1}^{r} X_j^2 \right) \; ds
\end{equation}
for some fixed $s^*$. Also, let
\[
g_i = \frac{\sqrt{d_i}}{Y_i} \,\exp \left( \int_{s*}^{s} \sum_{j=1}^{r} X_j^2 \right)
\]
so
\[
\frac{\dot{g}_i}{g_i}\, \exp \left( \int_{s*}^{s} \sum_{j=1}^{r} X_j^2 \right)
= -\frac{Y_i^\prime}{Y_i} + \sum_{j=1}^{r} X_j^2
=\frac{X_i}{\sqrt{d_i}}
\]
and hence
\[
{\rm tr} L = \sum_{i=1}^{r} \; \frac{d_i \dot{g_i}}{g_i}
= {\mathcal H} \exp \left( -\int_{s*}^{s} \sum_{j=1}^{r} X_j^2 \right).
\]
As ${\mathcal H}=1$ for Einstein trajectories,
it follows that $dt = \frac{ds}{{\rm tr}L}$.

Note that we have
\begin{equation}  \label{dmetric}
\frac{\sqrt{d_i}}{\tr L} \frac{X_i}{Y_i^2} = \dot{g_i} g_i,
\end{equation}
which is consistent with our formula in the soliton case.

\medskip

As in the soliton case, we can restrict to the subsystem
obtained by omitting the equation for $Y_1$, and deduce that the
flow is defined for all $s$ since ${\mathcal L}=0$ is compact
for the subsystem. We have $X_i,Y_i >0$ along our trajectories as before.
As ${\mathcal L}=0$, we in fact  have $0 < X_i < 1$ for all $i$.
Note also that the variety $\{{\mathcal H}=1, {\mathcal L}=0 \}$ is smooth.

\begin{lemma}
The Ricci-flat metrics corresponding to our trajectories are complete.
\end{lemma}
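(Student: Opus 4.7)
The plan is to show that the function $t(s)$ defined by (\ref{stE}) tends to $+\infty$ as $s \to +\infty$, so that the metric extends to all of $[0,\infty)$. Completeness at $t=0$ is already encoded in the smooth closure of the trajectory at the special orbit $P_0$ (reached as $s \to -\infty$), so the only real issue is the behaviour at the other end. The trajectory is defined for all $s \in \mathbb{R}$: as already noted for the soliton case, on the subsystem obtained by dropping the $Y_1$-equation, the locus $\mathcal{L}=0$ is compact, so the flow has no finite-time blow-up.

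The key observation is a uniform lower bound on $\mathcal{G}=\sum_{i=1}^r X_i^2$ coming from the constraint $\mathcal{H}=1$. By Cauchy--Schwarz,
\[
1 \;=\; \mathcal{H}^2 \;=\; \left(\sum_{i=1}^r \sqrt{d_i}\,X_i\right)^{\!2} \;\leq\; \left(\sum_{i=1}^r d_i\right)\!\left(\sum_{i=1}^r X_i^2\right) \;=\; n\,\mathcal{G},
\]
so $\mathcal{G} \geq \tfrac{1}{n}$ everywhere along the Einstein trajectory. Substituting into (\ref{stE}), for $s \geq s^*$ we obtain
\[
t(s) \;=\; \int_{s^*}^{s} \exp\!\left(\int_{s^*}^{\sigma} \mathcal{G}\,d\tau\right) d\sigma \;\geq\; \int_{s^*}^{s} e^{(\sigma - s^*)/n}\,d\sigma \;=\; n\bigl(e^{(s-s^*)/n}-1\bigr),
\]
which diverges as $s\to +\infty$. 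Equivalently, since $\mathcal{H}=1$ gives $\operatorname{tr} L = \exp\!\bigl(-\!\int_{s^*}^s \mathcal{G}\bigr)$ and $dt = ds/\operatorname{tr} L$, the mean curvature decays at least exponentially in $s$, so its reciprocal is non-integrable.

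At the other end, as $s\to -\infty$ the trajectory tends to $P_0=(1,0,\ldots,0;\,1,0,\ldots,0)$, where $\mathcal{G}\to 1$; hence $\int_{s^*}^{s}\mathcal{G}\,d\sigma \to -\infty$ and $dt \to 0$ exponentially, so $t(s)$ approaches a finite limit which we normalise to $0$ by choice of $s^*$. Together with the smooth closure of the metric at $t=0$ guaranteed by the initial conditions at $P_0$, this shows that the trajectory parametrises $t \in [0,\infty)$ and the resulting Ricci-flat metric is complete. The proof needs no convergence statement for $\gamma(s)$ as $s\to\infty$ (which would be the genuinely harder question, relevant instead for determining the asymptotic cone); the Cauchy--Schwarz inequality applied to $\mathcal{H}=1$ is the only input.
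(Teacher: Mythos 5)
Your proof is correct and follows essentially the same route as the paper: the authors likewise note that ${\mathcal G} \geq \frac{1}{n}{\mathcal H}^2 = \frac{1}{n}$ by Cauchy--Schwarz and conclude from (\ref{stE}) that $t \to \infty$ as $s \to \infty$. The extra details you supply (global existence of the flow via compactness of ${\mathcal L}=0$ in the subsystem, and the finite limit of $t$ as $s \to -\infty$) are consistent with what the paper establishes in the surrounding discussion.
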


\begin{proof}
We have
\[
dt = \exp \left(\int_{s^*}^{s} {\mathcal G} \right) \; ds
\]
where ${\mathcal G} = \sum_i X_i^2 \geq \frac{1}{n} {\mathcal H}^2$ by Cauchy-Schwartz.
Since ${\mathcal H} = 1$ along Einstein trajectories, it follows that
$t$ tends to $\infty$ as $s$ does, proving completeness.
\end{proof}

In order to examine the long time behaviour of the Ricci-flat trajectories,
we need to use a modified form of the Lyapunov function ${\mathscr F}_0$ for the flow
discussed in Corollary \ref{steady-F}. Writing ${\mathscr F}_0$ in terms of
the variables $X_i, Y_i$ (cf (\ref{def-Xi}) and (\ref{def-Yi})) we get
$$ {\mathscr F}_0 = \left(\sum_{i=1}^r \, X_i^2 + \sum_{i=1}^r \, \lambda_i Y_i^2
    - \frac{{\mathcal H}^2}{n} \right) \prod_{i=1}^r  \,
    \left(\frac{\sqrt{d_i}}{Y_i} \frac{1}{\xi} \right)^{-\frac{2d_i}{n}}.  $$
Taking into account the conditions ${\mathcal L}=0, {\mathcal H}=1$ and the
fact that $X_1$ plays a special role in the subsystem, we consider the
following modified Lyapunov function with domain
${\mathcal D}:=\{ {\mathcal L}=0, {\mathcal H}=1 \} \cap \{Y_i > 0 \,(i>1), \,|X_1 -1| < \sqrt{2} \} $:
\begin{equation} \label{mod-F}
\hat{\mathscr F} := \frac{1 - \frac{1}{n-1} (1 - X_1)^2}{\prod_{i=2}^{r}
       \left(\sqrt{\lambda_i Y_i} \right)^{\frac{2d_i}{n-1}}}
      =   \frac{\sum_{i=1}^{r} X_i^2 + \sum_{i=2}^{r} \lambda_i Y_i^2
       -\frac{1}{n-1} \left( {\mathcal H} - X_1 \right)^2}{ \prod_{i=2}^{r}
        \left(\sqrt{\lambda_i Y_i} \right)^{\frac{2d_i}{n-1}}}.
\end{equation}
Note that $\hat{\mathscr F}$ is positive along our trajectories as $0 < X_1 < 1$.

\begin{lemma} \label{F-decr}
$\hat{\mathscr F}$ is non-increasing along the trajectories of the flow lying in $\mathcal D$.
\end{lemma}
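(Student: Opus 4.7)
The plan is to show $(\log\hat{\mathscr F})'\le 0$ on $\mathcal D$ by logarithmic differentiation, reducing the non-positivity to a scalar inequality that follows from Cauchy--Schwarz. Set $A:=\mathcal H-X_1=\sum_{i\ge 2}\sqrt{d_i}X_i$ and $\Lambda:=\sum_{i\ge 2}\lambda_iY_i^2$; on $\mathcal D$ these satisfy $A=1-X_1$ (from $\mathcal H=1$) and $\mathcal G=1-\Lambda$ (from $\mathcal L=0$), so \eqref{eqnX} with $\lambda_1=0$ gives $X_1'=-X_1\Lambda$, while \eqref{eqnY} gives $Y_i'/Y_i=\mathcal G-X_i/\sqrt{d_i}$ for $i\ge 2$. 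Using $\sum_{i\ge 2}d_i=n-1$, differentiating $\log\hat{\mathscr F}=\log N - \log D$ with $N=1-A^2/(n-1)$ and $D$ the denominator of $\hat{\mathscr F}$ yields, after simplification,
\[
(\log\hat{\mathscr F})' \;=\; -\frac{2AX_1\Lambda}{(n-1)-A^2}\;+\;\frac{2A}{n-1}\;-\;2(1-\Lambda).
\]

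Next I would clear denominators. Multiplying by the positive quantity $\tfrac{1}{2}(n-1)[(n-1)-A^2]$ (positive on $\mathcal D$ because $|A|<\sqrt 2\le\sqrt{n-1}$ for $n\ge 3$), the condition $(\log\hat{\mathscr F})'\le 0$ becomes
\[
(n-1)\Lambda\bigl[(n-1)-A^2-AX_1\bigr]\;\le\;[(n-1)-A^2]\,[(n-1)-A].
\]
The key identity is that $A+X_1=1$ implies $A^2+AX_1=A$, so $(n-1)-A^2-AX_1=(n-1)-A$. Since $(n-1)-A>\sqrt 2\ge 0$ on $\mathcal D$, dividing through by $(n-1)-A$ then reduces the claim to the scalar inequality $\Lambda\le 1-A^2/(n-1)$.

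This last inequality is Cauchy--Schwarz applied to $A=\sum_{i\ge 2}\sqrt{d_i}X_i$:
\[
A^2 \;\le\; \Bigl(\sum_{i\ge 2}d_i\Bigr)\Bigl(\sum_{i\ge 2}X_i^2\Bigr) \;=\; (n-1)(\mathcal G-X_1^2) \;=\; (n-1)(1-X_1^2-\Lambda) \;\le\; (n-1)(1-\Lambda),
\]
which rearranges to the required bound. The main technical point of the argument is spotting the cancellation $(n-1)-A^2-AX_1=(n-1)-A$ that uses both constraints $\mathcal L=0$ and $\mathcal H=1$ simultaneously; the definition of $\hat{\mathscr F}$ is engineered precisely so that this telescoping happens and converts the inequality into a Cauchy--Schwarz estimate. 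The equality case $X_1=0$ together with $X_i\propto\sqrt{d_i}$ for $i\ge 2$ recovers the critical point of type (iii) in Lemma \ref{zeros} with index set $\{2,\ldots,r\}$, the Ricci-flat cone attractor governing the long-time behaviour of the flow.
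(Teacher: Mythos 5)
Your proof is correct and follows essentially the same route as the paper: logarithmic differentiation of $\hat{\mathscr F}$ using the flow equations and the constraints ${\mathcal L}=0$, ${\mathcal H}=1$, followed by the Cauchy--Schwarz bound on $\sum_{i\ge 2}\sqrt{d_i}X_i$, with the domain restriction $|X_1-1|<\sqrt{2}$ guaranteeing positivity of the denominator $(n-1)-A^2$. The only difference is bookkeeping --- you substitute ${\mathcal G}=1-\Lambda$ and exploit $A+X_1=1$ to factor out $(n-1)-A$ and reduce to the clean inequality $\Lambda\le 1-A^2/(n-1)$, whereas the paper keeps ${\mathcal G}$ explicit and bounds the numerator by $X_1^2(2-n-X_1)$; the equality analysis ($X_1=0$, $X_i=\sqrt{d_i}/(n-1)$) agrees in both versions.
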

\begin{proof}
After some algebra we find
\[
\frac{1}{2} \frac{\hat{\mathscr F}^{\prime}}{\hat{\mathscr F}}
= \frac{X_1(1-X_1)({\mathcal G}-1) +(n-2 + 2X_1 - X_1^2)(\frac{1-X_1}{n-1} - {\mathcal G})}
{n-2 + 2X_1 - X_1^2}
\]
where ${\mathcal G} = \sum_{i=1}^{r} X_i^2$ as usual.
For our trajectories the denominator is positive. The numerator
may be rewritten as
\[
\frac{1-X_1}{n-1}\, (n-2 - (n-3)X_1 - X_1^2) + {\mathcal G}(-X_1 + 2-n),
\]
in which the term multiplying $\mathcal G$ is negative.
Now, using Cauchy-Schwartz and ${\mathcal H} =1$, we have the inequality
\[
 {\mathcal G}  \geq X_1^2 + \frac{1}{n-1} \left( \sum_{i=2}^{r} \sqrt{d_i} X_i \right)^2
= X_1^2 + \frac{(1-X_1)^2}{n-1}.
\]
Substituting into the above expression for the numerator, we find
after simplification that the numerator is $\leq X_1^2 (2-n - X_1)$
which is $\leq 0$.
\end{proof}

\begin{rmk} \label{Fstat}
We have $\hat{\mathscr F}^{\prime} =0$ iff $X_1=0$ and we have equality in
Cauchy-Schwartz, that is, when $X_i = \frac{\sqrt{d_i}}{n-1}$ for $i \geq 2$.
\end{rmk}

\begin{lemma} \label{Fmin}
The function $\hat{\mathscr F}$ has a unique critical point in $\mathcal D$
which is the global minimum point.
\end{lemma}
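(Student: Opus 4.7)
The plan is to reduce the constrained problem to a one-variable minimization via nested optimization. Introduce $y := 1 - X_1$. Using $\mathcal L = 0$ together with $Y_i > 0$ and $\lambda_i > 0$, one has $X_1^2 < 1$, which combined with $|X_1 - 1| < \sqrt{2}$ gives $y \in (0, \sqrt{2})$. Positivity of the auxiliary quantity $T := 1 - X_1^2 - \sum_{i \geq 2} X_i^2 = \sum_{i \geq 2} \lambda_i Y_i^2$, together with Cauchy--Schwarz, will further restrict $y$ to $(0, \min(\sqrt{2}, 2(n-1)/n))$; call this admissible interval $(0, y_{\max})$.

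I will then exploit the vanishing of all tangential derivatives of $\hat{\mathscr F}|_\mathcal D$ at a critical point via three nested stationarity conditions. First, varying the $Y_j$'s at fixed $(X_i)$ with $\sum \lambda_j Y_j^2 = T$ is tangent to $\mathcal D$; since the numerator $N(X_1) = (n-1-y^2)/(n-1)$ does not depend on $Y$, the denominator $D = \prod_{j \geq 2}(\lambda_j Y_j^2)^{d_j/(n-1)}$ must be stationary on this sphere. The weighted AM--GM inequality with weights $d_j/(n-1)$ (summing to unity) forces the unique extremum $\lambda_j Y_j^2 = T d_j/(n-1)$, at which $D = T \prod_j (d_j/(n-1))^{d_j/(n-1)}$. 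Second, varying the $X_i$'s ($i \geq 2$) at fixed $y$ along $\sum \sqrt{d_i}\, X_i = y$ preserves $N$; since $D_{\max}$ is linear in $T$, stationarity forces $T$ to be maximal, which by Cauchy--Schwarz uniquely gives $X_i = y\sqrt{d_i}/(n-1)$ and $T_{\max}(y) = y(2(n-1) - ny)/(n-1)$.

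Third, along this optimal locus $\hat{\mathscr F}$ reduces, up to a positive multiplicative constant, to
\[
f(y) := \frac{n-1-y^2}{y(2(n-1) - ny)},
\]
so I will minimize $f$ over $(0, y_{\max})$. A direct computation yields
\[
f'(y) = \frac{-2(n-1)(y-1)(y-(n-1))}{[y(2(n-1) - ny)]^2}.
\]
Since $n-1 \geq 2 > y_{\max}$ for $n \geq 3$, the root $y = n-1$ lies outside $(0, y_{\max})$, leaving $y = 1$ as the unique interior critical point. A sign check for $f'$ then shows $f$ is decreasing on $(0, 1)$ and increasing on $(1, y_{\max})$, so $y = 1$ is the global minimum. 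Unwinding, the unique critical point of $\hat{\mathscr F}$ on $\mathcal D$ is $X_1 = 0$, $X_i = \sqrt{d_i}/(n-1)$ for $i \geq 2$, and $\lambda_i Y_i^2 = d_i(n-2)/(n-1)^2$, and it is the global minimum. The main technical point is the clean factorization of the numerator of $f'(y)$, which makes uniqueness immediate.
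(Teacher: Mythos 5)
Your proof is correct, but it takes a genuinely different route from the paper's. The paper works directly with the second expression for $\hat{\mathscr F}$ in (\ref{mod-F}): by Cauchy--Schwarz (together with $X_1^2 \geq 0$) the numerator is bounded below by $T:=\sum_{i\geq 2}\lambda_i Y_i^2$, while a weighted AM--GM computation (as in Prop.\ 4.10 of \cite{DHW}) bounds the denominator above by $T\prod_{i\geq 2}\bigl(d_i/(n-1)\bigr)^{d_i/(n-1)}$; the factor $T$ cancels and the sharp lower bound $(n-1)\prod_{i\geq 2} d_i^{-d_i/(n-1)}$ drops out in one stroke, with the equality analysis identifying $E$ --- but uniqueness of the critical point is then only asserted as ``easy to check.'' You instead run a nested optimization: first over the $Y_j$ at fixed $T$ (the same AM--GM step), then over the $X_i$, $i\geq 2$, at fixed $X_1$ (the same Cauchy--Schwarz step), and finally over $y=1-X_1$, which postpones the cancellation and leaves the explicit one-variable function $f(y)$ with $f'(y)$ factoring as $-2(n-1)(y-1)(y-(n-1))$ over a positive denominator; since $n-1\geq 2>y_{\max}$, only $y=1$ survives. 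This is slightly longer, but it buys two things the paper leaves implicit: an actual argument that $E$ is the \emph{unique} critical point (each of your three conditions is forced by tangential variations within $\mathcal D$, the middle one because $T$ restricted to the affine slice $\sum_{i\geq 2}\sqrt{d_i}X_i=y$ is strictly concave, so stationary means maximal), and a clear monotonicity picture of $\hat{\mathscr F}$ along the optimal fibre in $X_1$. Your bookkeeping also checks out: $X_1^2<1$ on $\mathcal D$ gives $y>0$, positivity of $T$ gives $y<2(n-1)/n$, the numerator $N(y)$ stays positive on the resulting interval, and the limiting point agrees with the paper's $E$ and its minimum value.
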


\begin{proof}
We use the second expression of $\hat{\mathscr F}$ in (\ref{mod-F}). By Cauchy-Schwartz
and $X_1 \geq 0$  the numerator is at least $\sum_{i=2}^{r} \lambda_i Y_i^2$.
Next, using similar calculations to those in Prop 4.10 of \cite{DHW} we find that
$\hat{\mathscr F} \geq (n-1) \prod_{i=2}^{r} d_i^{-\frac{d_i}{n-1}}$ in $\mathcal D$.
Equality holds exactly at the point $E$ whose coordinates are given by
\[
X_1=0, \; \;  X_i =\frac{\sqrt{d_i}}{n-1}, \,\,
Y_i = \sqrt{\frac{n-2}{\lambda_i}}\, X_i  \; \; : \;\; (i=2, \ldots r) ;
\]
in fact it is easy to check that $E$ is the unique critical point of $\hat{\mathscr F}$
in $\mathcal D$.
\end{proof}

We can now use $\hat{\mathscr F}$ to analyse the long-time behaviour of the flow.
\begin{thm} \label{omega}
The $r-2$ parameter family of Ricci-flat trajectories all converge to $E$ as $s$
tends to infinity.
\end{thm}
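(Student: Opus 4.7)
The plan is to apply a LaSalle-type invariance principle using $\hat{\mathscr F}$ as a Lyapunov function. Every Ricci-flat trajectory emanating from $P_0$ satisfies $0 < X_i < 1$ and $Y_i > 0$ for $i \geq 2$, so $|X_1 - 1| < 1 < \sqrt{2}$ automatically and the trajectory lies in $\mathcal D$ for all $s$ after an initial time; hence Lemmas \ref{F-decr} and \ref{Fmin} apply.

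Fix $s_0$ large enough that $\hat{\mathscr F}(s_0)$ is finite. By Lemma \ref{F-decr} the map $s \mapsto \hat{\mathscr F}(s)$ is non-increasing on $[s_0, \infty)$, and by Lemma \ref{Fmin} it is bounded below by the positive constant $(n-1)\prod_{i=2}^{r} d_i^{-d_i/(n-1)}$. Hence $\hat{\mathscr F}$ converges to a finite limit. The constraint $\mathcal L = 0$ together with $\lambda_i > 0$ keeps every $X_i$ and every $Y_i$ ($i \geq 2$) bounded above, and the numerator of the first expression in (\ref{mod-F}) stays in $[(n-2)/(n-1),\,1]$ under $0 < X_1 < 1$. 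Therefore the uniform upper bound $\hat{\mathscr F}(s) \leq \hat{\mathscr F}(s_0)$ forces the denominator $\prod_{i=2}^{r} (\sqrt{\lambda_i}\,Y_i)^{2d_i/(n-1)}$ to stay bounded away from zero, so each $Y_i$ ($i \geq 2$) is bounded below by a positive constant. Consequently the trajectory is confined to a compact subset $K$ of $\mathcal D$.

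I would then apply LaSalle on $K$: the $\omega$-limit set $\Omega$ is a non-empty, compact, flow-invariant subset of $\{\hat{\mathscr F}^{\prime}=0\} \cap K$. By Remark \ref{Fstat}, this locus equals $\{X_1 = 0,\ X_i = \sqrt{d_i}/(n-1)\text{ for } i \geq 2\} \cap K$. On any full orbit lying in this locus the coordinates $X_1$ and $X_i$ are constant, so $X_i^{\prime} \equiv 0$; substituting into (\ref{eqnX}) yields $\lambda_i Y_i^2/\sqrt{d_i} = X_i(1 - \mathcal G)$ with $\mathcal G = 1/(n-1)$, which pins $Y_i$ to the unique value given in Lemma \ref{Fmin}. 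Hence $\Omega = \{E\}$, and the trajectory converges to $E$.

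The main obstacle is the compactness step. Because $\hat{\mathscr F}$ blows up along the boundary $\{Y_i = 0\} \subset \partial \mathcal D$, one cannot simply use the compactness of $\{\mathcal L = 0\}$ in the subsystem: a priori the trajectory could drift toward that boundary. The key point is that the monotone decrease of $\hat{\mathscr F}$, combined with the boundedness of its numerator, quantitatively rules this out and traps the trajectory in $K$. Once $K$ is in hand, the identification of the largest invariant subset of $\{\hat{\mathscr F}^{\prime}=0\}$ is a short algebraic computation reducing to $\{E\}$.
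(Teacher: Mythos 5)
Your proposal is correct and follows essentially the same route as the paper: monotonicity of $\hat{\mathscr F}$ (Lemma \ref{F-decr}), its positive lower bound and unique minimizer $E$ (Lemma \ref{Fmin}), Remark \ref{Fstat} to locate $\{\hat{\mathscr F}^{\prime}=0\}$, and flow-invariance of the $\omega$-limit set to pin down $Y_i$ and conclude $\Omega=\{E\}$. The only (inessential) difference is in handling the boundary $\{Y_i=0\}$: you trap the entire trajectory in a compact subset of $\mathcal D$ via the uniform bound $\hat{\mathscr F}\leq\hat{\mathscr F}(s_0)$, whereas the paper takes $\omega$-limits inside the compact set $\{\mathcal L=0,\mathcal H=1\}$ of the subsystem and excludes boundary limit points because $\hat{\mathscr F}$ would blow up there.
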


\begin{proof}
As usual we work with the subsystem omitting $Y_1$. As $\{{\mathcal L}=0,
{\mathcal H}=1 \}$ is now compact,  for each trajectory $\gamma$
we have an $\omega$-limit set $\Omega$ lying in the level set
$\hat{\mathscr F} = \mu$ where $\mu$ is the infimum of $\hat{\mathscr F}$ along the
trajectory. Notice that $\mu > 0$ by Lemma \ref{Fmin}, and from the expression
of $\hat{\mathscr F}$ none of the $Y_i$ coordinates of a point in $\Omega$ can be zero.
Hence $\Omega$ lies in $\mathcal D$. As $\Omega$ is flow-invariant, Remark \ref{Fstat} shows
that on $\Omega$ we have $X_1 =0$ and $X_i =\frac{\sqrt{d_i}}{n-1}$ for $i \geq 2$.
In particular, $Y_i^{\prime}$ must vanish. Again by flow-invariance,  we also need
$X_i^{\prime}$ to vanish, and this now forces $\Omega$ to be $\{ E \}$,
as all other stationary points in ${\mathcal L} = 0$ do not lie in ${\mathcal D}$.

So $\mu = \hat{\mathscr F}(E)$, the global minimum of $\hat{\mathscr F}$
in $\mathcal D$. Now let $\epsilon > 0$ be sufficiently small so that the
$\epsilon$-ball around $E$ in ${\mathcal L}=0, {\mathcal H}=1$ is contained
in the region where all $Y_i >0$. Recall that ${\mathcal L}=0, {\mathcal H}=1$
is smooth at $E$. From Lemma \ref{Fmin} we know $E$ is the unique point where
$\mu$ is attained. Therefore the minimum of $\hat{\mathscr F}$ on the $\epsilon$-sphere
around $E$ is $\mu + \delta$ for some $\delta>0$. As $E$ is the
$\omega$-limit set, there exists a time $s_*$ where the trajectory
lies in the open $\epsilon$-ball and $\hat{\mathscr F} (\gamma(s_*)) < \mu + (\delta/2)$.
Now by monotonicity of $\hat{\mathscr F}$ the trajectory can never pass back through the
$\epsilon$-sphere, so is trapped for all later time in the $\epsilon$-ball.
Hence the trajectory converges to $E$.
\end{proof}

\begin{rmk}
One can give an alternative proof of B\"ohm's existence result for complete Ricci-flat metrics
on multiply warped products with $d_1 > 1$ along the above lines by using instead the Lyapunov
function
\[
{\sf F} = \prod_{j=1}^{r} \, Y_j^{-\frac{2d_j}{n}}.
\]
$\sf F$ is again positive and non-increasing along the trajectories of the flow on the
locus where ${\mathcal H} =1, {\mathcal L} =0, X_i, Y_i >0 \, (i=1, ..., r)$. In this set, ${\sf F}$ has a
unique critical point, which is a global minimum, whose coordinates are
$X_i = \frac{\sqrt{d_i}}{n},\, Y_i = \sqrt{\frac{n-1}{\lambda_i}} X_i$. This point
corresponds geometrically to the Ricci flat cone on the product Einstein metric
of $S^{d_1} \times M_2 \times \cdots \times M_r$. An account of this alternative proof
can be found in the McMaster M. Sc. thesis of Cong Zhou.
\end{rmk}

We now consider the asymptotics of the complete Ricci-flat metrics we have constructed.
Note that ${\mathcal G} = \sum_{i=1}^{r} X_i^2$ equals $\frac{1}{n-1}$ at $E$. So we can choose
a sufficiently small positive $\delta$ and $s_1 > s^*$ such that $|{\mathcal G} - \frac{1}{n-1}| < \delta$
for all $s \geq s_1$. Equation (\ref{stE}) then gives us estimates
\[
\frac{\rho_0 (n-1)}{1 - \delta(n-1)} \, \left(e^{(\frac{1}{n-1} - \delta)(s-s_1)}  -1 \right) < t-t_1 <
\frac{\rho_0 (n-1)}{1 + \delta(n-1)} \, \left(e^{(\frac{1}{n-1} + \delta)(s-s_1)}  -1 \right)
\]
where $\rho_0$ is the constant $\exp (\int_{s^*}^{s_1} {\mathcal G})$, and $t_1$ corresponds to $s_1$ via (\ref{stE}).

\begin{lemma} \label{asymp-g1}
The function $g_1(t)^2$ is increasing and bounded from above and
hence converges to a positive constant as $t$ tends to infinity.
\end{lemma}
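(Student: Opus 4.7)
The plan is to work in the $s$-variable, derive a remarkably simple ODE for $\log g_1^2$, and then use the long-time convergence from Theorem \ref{omega} to bound the resulting integral. First I would rewrite $g_1^2$ using the definition preceding (\ref{dmetric}): with $d_1 = 1$, one has $g_1 = Y_1^{-1}\exp(\int_{s^*}^{s} \mathcal{G})$, so $\log g_1^2 = -2 \log Y_1 + 2 \int_{s^*}^{s} \mathcal{G}$. Differentiating in $s$ and substituting the $Y_1$-equation $Y_1' = Y_1(\mathcal{G} - X_1)$ (which is (\ref{eqnY}) specialised to $d_1 = 1$) yields the clean identity
\[
(\log g_1^2)' \;=\; -2(\mathcal{G} - X_1) + 2\mathcal{G} \;=\; 2 X_1.
\]
Since the trajectories in the Ricci-flat family have $X_i > 0$ throughout (the analogue of Lemma 4.4(i) of \cite{DW2} cited earlier), this already gives the monotonicity statement.

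For boundedness, integrating the identity gives $g_1^2(s) = g_1^2(s_1)\exp\bigl(2\int_{s_1}^{s} X_1 \, d\sigma\bigr)$, so the question reduces to showing $\int_{s_1}^{\infty} X_1 \, d\sigma < \infty$. The equation (\ref{eqnX}) for $X_1$ collapses, thanks to $\lambda_1 = 0$, to
\[
X_1' = X_1(\mathcal{G} - 1),
\]
from which $X_1(s) = X_1(s_1)\exp\bigl(\int_{s_1}^{s} (\mathcal{G} - 1)\, d\sigma \bigr)$. By Theorem \ref{omega} the trajectory converges to the point $E$, where $\mathcal{G} = \sum_{i \geq 2} d_i/(n-1)^2 = 1/(n-1)$, so $\mathcal{G} - 1 \to -(n-2)/(n-1)$. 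Because $d_1 = 1$ and each $d_i \geq 2$ for $i \geq 2$ (needed for $\lambda_i > 0$), we have $n \geq 3$, so this limiting rate is strictly negative. Fixing any $0 < \delta < (n-2)/(n-1)$ and choosing $s_1$ large enough so that $\mathcal{G}(s) - 1 < -(n-2)/(n-1) + \delta$ for $s \geq s_1$ then gives the exponential bound $X_1(s) \leq X_1(s_1) \, e^{-((n-2)/(n-1) - \delta)(s - s_1)}$, which is clearly integrable on $[s_1,\infty)$. Hence $g_1^2$ is bounded above and, being monotone, converges to a positive limit.

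The only real obstacle is ensuring that the asymptotic decay rate is strictly positive; this is exactly where the dimension constraint $n \geq 3$ enters, and it is built into the hypothesis that $M_2,\ldots,M_r$ are positive-scalar-curvature Einstein factors. Everything else is a direct ODE manipulation, powered by the convergence to $E$ already established.
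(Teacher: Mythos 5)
Your proof is correct, and it streamlines the paper's argument in a nice way. The paper's own proof starts from the same formula $g_1\dot g_1 = \frac{1}{\tr L}\frac{X_1}{Y_1^2}$, converts to the $s$-variable, and then bounds the three factors $X_1$, $1/Y_1^2$, and $\exp(2\int\mathcal{G})$ \emph{separately} — using the $Y_1$-equation together with Cauchy--Schwartz to get a lower bound on the growth of $Y_1$, and an upper bound on $\exp(\int\mathcal{G})$ — before multiplying the three exponential estimates to obtain an integrand decaying like $e^{(-\frac{n-2}{n-1}+5\delta)(s-s_1)}$. You instead observe that the product $\frac{1}{Y_1^2}\exp(2\int_{s^*}^{s}\mathcal{G})$ is exactly $g_1^2$ itself, so that the $Y_1$-equation cancels against the exponential factor and yields the clean identity $(\log g_1^2)' = 2X_1$; this reduces everything to the integrability of $X_1$ on $[s_1,\infty)$, which follows from the single estimate $X_1' = X_1(\mathcal{G}-1) \leq X_1(-\frac{n-2}{n-1}+\delta)$ that the paper also uses. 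The decisive input is identical in both arguments — convergence to $E$ (Theorem \ref{omega}) forces $\mathcal{G}-1$ to be eventually bounded above by a negative constant since $n\geq 3$, whence $X_1$ decays exponentially — but your bookkeeping is lighter: you never need the separate $Y_1$ growth estimate or the $5\delta$ error accounting, and the monotonicity claim becomes immediate from $X_1>0$. All the facts you invoke ($X_1>0$ along the trajectories, $\mathcal{G}(E)=\frac{1}{n-1}$, $d_i\geq 2$ for $i\geq 2$) are established in the paper before this lemma, so the argument is complete as written.
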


\begin{proof} That $g_1(t)^2$ is increasing follows from the formula (\ref{dmetric}).
Integrating this we obtain
$$ \frac{1}{2} (g_1(t)^2 - g_1(t_1)^2) = \int_{s_1}^{s} \, \frac{X_1}{Y_1^2}
        \, \exp\left(2 \int_{s^*}^{\sigma} \, \sum_j \, X_j^2   \right) d \sigma.$$
We shall estimate the integral by estimating $X_1, 1/Y_1^2$ and the exponential
separately.

The equation for $X_1$ implies that $ X_1^{\prime} \leq X_1 \left( -\frac{n-2}{n-1} + \delta  \right),$
which yields upon integration
$$ X_1(s) \leq X_1(s_1) \exp \left(-\left(\frac{n-2}{n-1}-\delta \right)(s-s_1)  \right). $$
The equation for $Y_1$ gives
$$ (\log Y_1)^{\prime} = \sum_{j=2}^{r} X_j^2 + X_1^2 - X_1 \geq \frac{(1-X_1)^2}{n-1} + X_1^2 - X_1
      = \frac{1}{n-1} - \left(\frac{n+1}{n-1}\right) X_1 + \frac{n}{n-1} X_1^2,$$
where we have used the Cauchy-Schwartz inequality and ${\mathcal H} =1$. Since $X_1$ tends to $0$
as $s$ tends to infinity, we may assume that $s_1$ has been also chosen so that the absolute value
of the terms involving $X_1$ in the above is less than $\delta$. Integration of the inequality then gives
$$ \frac{1}{Y_1(s)^2} < \frac{1}{Y_1(s_1)^2} \exp\left(-2(s-s_1)\left(\frac{1}{n-1} -\delta \right)\right). $$

Finally,
$$ \exp \int_{s^*}^{\sigma} \sum_j \,X_j^2  = \left(\exp \int_{s^*}^{s_1} {\mathcal G} \right)
       \left( \exp \int_{s_1}^{\sigma} {\mathcal G} \right) \,\leq
       \,\rho_0 \, \exp \left((\sigma -s_1)\left(\frac{1}{n-1}+ \delta \right)\right),$$
where $\rho_0 = \exp(\int_{s^*}^{s_1} {\mathcal G})$ (which depends in particular on the choice of $\delta$).

Now combining the three inequalities we get
$$ \frac{1}{2} (g_1(t)^2 - g_1(t_1)^2) < \rho_0^2\, \left( \frac{X_1(s_1)}{Y_1(s_1)^2}\right)
         \int_{s_1}^{s} \exp \left(\left(-\frac{n-2}{n-1} + 5\delta \right)(\sigma - s_1)\right) d\sigma. $$
As $\delta$ can be chosen arbitrarily small, it follows that $g_1(t)^2$ is bounded above for all $t$.
\end{proof}

Similarly, arguing as in the soliton case, and using the fact
that $\lim_{s \rightarrow \infty} \frac{X_i}{Y_i^2} = \frac{X_i}{Y_i^2}(E)
=\frac{n-1}{n-2} \frac{\lambda_i}{\sqrt{d_i}}$ for $i \geq 2$, we obtain
estimates for $g_i^2(t)- g_i^{2}(t_1)$ for all $t > t_1$. These imply that asymptotically
\begin{equation} \label{cone-asymp}
  c_1 t^{2-\epsilon_0} < g_i(t)^2 < c_2 t^{2+\epsilon_0}
\end{equation}
for arbitrarily small $\epsilon_0 > 0$ and positive constants $c_1, c_2$
depending on $\epsilon_0$ and $\delta$.

\begin{rmk} The asymptotics obtained above are an analogue of those of the
Riemannian Schwarzschild metric, which is the case where $r=2$ and $M_2 = S^2$
(with the constant curvature $1$ metric).

We note also that $Y_1$ tends to infinity (exponentially fast in $s$). So in the
full phase space, the trajectories of the Ricci-flat metrics are indeed unbounded.
\end{rmk}

We have therefore proved

\begin{thm} \label{thm-ricci-flat}
Let $M_2, \ldots, M_r$ be closed Einstein manifolds with positive scalar
curvature. There is an $r-2$ parameter family of non-homothetic complete
smooth Ricci flat metrics of form $($\ref{metric}$)$ on the trivial rank $2$
vector bundle over $M_2 \times \ldots \times M_r$.  Asymptotically, $g_1(t)$
tends to a positive constant and $g_i(t)^2, i>1$  are approximately quadratic
in the sense of $($\ref{cone-asymp}$)$.  \qed
\end{thm}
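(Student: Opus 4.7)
The plan is to assemble the pieces already developed in this section into a single statement. First, I would produce the trajectories by applying the centre/unstable manifold theorem from \cite{Buz} at the critical point $P_0$ (whose linearization, as computed in \S3, has eigenvalues $2$, $1$ with multiplicity $r-1$, and $0$ with multiplicity $r$) and intersecting with the smooth flow-invariant locus $\{\mathcal L = 0,\, \mathcal H = 1\}$, which is of codimension $2$. This produces an $(r-1)$-dimensional family of trajectories $\gamma(s)$ with $\gamma(s) \to P_0$ as $s \to -\infty$; quotienting by the one-parameter time-translation symmetry $s \mapsto s + c$ then gives the claimed $r-2$ parameter family of non-homothetic Ricci-flat trajectories. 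One can further arrange the family to lie in the open region where all $X_i, Y_i$ are positive, using flow-invariance of the loci $\{X_i = 0\}$ and $\{Y_i = 0\}$.

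Next, smoothness and completeness of the corresponding metrics need to be verified. Smoothness at $t = 0$ follows as in the soliton case from $\gamma(s) \to P_0$ together with the Ricci-flat analogues of (\ref{def-tgi})--(\ref{logdiff-gi}) based on (\ref{stE}): the initial conditions (\ref{bdy0})--(\ref{bdy2}) are consequences of the limits $X_1, Y_1 \to 1$ and $X_i, Y_i \to 0$ for $i > 1$, while (\ref{bdyu}) is automatic because the soliton potential is constant along any Ricci-flat trajectory. Completeness is already the content of the Lemma following (\ref{dmetric}).

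For the asymptotics, Theorem \ref{omega} gives that each trajectory converges to $E$ as $s \to \infty$, and Lemma \ref{asymp-g1} directly gives that $g_1(t)$ tends to a positive constant. To establish (\ref{cone-asymp}) for $g_i$ with $i > 1$ I would imitate the proof of Lemma \ref{asymp-g1}, using the limit $\lim_{s \to \infty} X_i / Y_i^2 = (n-1)\lambda_i/((n-2)\sqrt{d_i})$, read off from the coordinates of $E$ in Lemma \ref{Fmin}, together with the estimate $|\mathcal G - 1/(n-1)| < \delta$ for $s \geq s_1$. Integrating (\ref{dmetric}) then yields two-sided exponential bounds in $s$, which via (\ref{stE}) translate into the polynomial bounds $c_1 t^{2-\epsilon_0} < g_i(t)^2 < c_2 t^{2+\epsilon_0}$ in $t$.

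The main obstacle I anticipate is the dimension count and non-homothety claim in the first step: one must verify, rather than merely assert, that the centre-unstable manifold meets $\{\mathcal L = 0,\, \mathcal H = 1\}$ transversally so as to yield an $(r-1)$-dimensional family, and that distinct elements modulo time shift produce genuinely non-homothetic metrics. Both points are handled by the general framework of \cite{Buz} combined with the smoothness of the invariant locus at $P_0$, but writing out the transversality and quotient arguments is the least mechanical part of the proof; everything downstream is a routine assembly of the lemmas above.
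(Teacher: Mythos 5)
Your proposal is correct and follows essentially the same route as the paper, which proves this theorem precisely by assembling the preceding results of \S 4: the $r-2$ parameter family of trajectories in $\{\mathcal L=0,\ \mathcal H=1\}$ emanating from $P_0$, the completeness lemma, Theorem \ref{omega}, Lemma \ref{asymp-g1}, and the soliton-style argument with $\lim_{s\to\infty} X_i/Y_i^2 = \tfrac{n-1}{n-2}\tfrac{\lambda_i}{\sqrt{d_i}}$ for $i>1$. The only cosmetic difference is your bookkeeping of the parameter count (intersection with the invariant locus plus quotient by time translation versus the paper's direct appeal to the initial-value parametrization at the singular orbit modulo homothety), but both yield $r-2$.
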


\begin{rmk} \label{superpot}
The cohomogeneity one Einstein equations can be viewed as the flow on the zero energy
hypersurface of a certain Hamiltonian ${\sf H}$ constructed in \cite{DW4}, \S 1. Recall
that a superpotential $\Phi$ of ${\sf H}$ is a $C^2$ function on the full momentum phase
space such that the equation ${\sf H}(q, d\Phi_q) = 0$ holds. It is shown in \cite{DW5} that
a superpotential automatically gives rise to a first order subsystem of the cohomogeneity
one Einstein equations. In the Ricci-flat case, physicists have frequently been able to show
that solutions to the first order subsystem represented metrics with special holonomy
(see, e.g., \cite{CGLP1} and \cite{CGLP2}).

On the other hand, there are examples of superpotentials which are not associated with
special holonomy, but nevertheless are related to some degree of integrability of
the Ricci-flat equations. We mention here case (1) of Theorem 6.3 and Examples 8.2 and 8.3
in \cite{DW4}. The hypersurfaces in these examples are respectively the product
of one, two, or three Einstein manifolds with positive scalar curvature. The dimensions
of the factors in the latter two cases are restricted, i.e., up to permutation, they
must be $(6,3), (8,2), (5,5)$ and $(3, 3, 3), (4, 4, 2), (6, 3, 2)$ respectively.
With an appropriately chosen sphere as one of the Einstein factors, there are {\em explicit}
solutions of the first order subsystem which are complete smooth Ricci-flat metrics,
and these must occur among the B\"ohm metrics since $d_1 > 1$ is satisfied.

It turns out that if we take a product of the above examples with a circle, we obtain
hypersurfaces with superpotentials as well (cf Remark 2.8 of \cite{DW4}, which applies
also to the null case). However, there are two differences. First, the convex polytopes
associated to the scalar curvature function and superpotentials are no longer of maximal
dimension. This explains why these examples did not occur in the classifications in
\cite{DW6} and \cite{DW7}.  Second, since the circle can be taken to be the collapsing factor,
none of the positive Einstein factors need to be spheres anymore in order to obtain
complete smooth solutions of the first order subsystem. These Ricci-flat metrics
must occur among those constructed in this section. Case (1) of Theorem 6.3 now becomes
the $r=2$ case, which is known to be explicitly integrable (cf \cite{BB}).
\end{rmk}

\medskip

The topology of the underlying manifolds where we have constructed steady soliton
and Ricci-flat structures is also very interesting. We shall consider here the $r=2$
case briefly.

For the Einstein factor $M_2$ we can take the Kervaire sphere $\Sigma$
of dimension $q=4m+1$ with $m \neq 0, 1, 3, 7, 15$ and possibly $31$, or one of the
homotopy spheres in dimension $7, 11, 15$ which bound a parallelizable manifold.
The solution of the Arf-Kervaire invariant problem by Hill, Hopkins, and Ravenel \cite{HHR} implies that
in the above dimensions the Kervaire sphere is not diffeomorphic to the standard sphere.
At the same time, the work of Boyer, Galicki, and Kollar \cite{BGK1}, \cite{BGK2} provides
continuous families of Sasakian Einstein metrics on these homotopy spheres.
On the other hand, it is known (Theorem 1 in \cite{KS}) that for a non-standard homotopy $q$-sphere
$\Sigma$ that bounds a parallelizable manifold, $\R^2 \times \Sigma$ is not diffeomorphic to
$\R^2 \times S^q$. Therefore, Theorems \ref{thm-ricci-flat} and \ref{mainthm} imply

\begin{cor} \label{diffstr}
In dimensions $9, 13, 17$ and all dimensions $4m+3$ with $m \neq 0, 1, 3, 7, 15, 31$
there exist pairs of homeomorphic but not diffeomorphic manifolds both of which admit
a complete Ricci-flat metric. The same conclusion holds for non-Einstein, complete,
steady gradient Ricci soliton structures.
\end{cor}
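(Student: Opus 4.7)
The plan is to specialize Theorems \ref{mainthm} and \ref{thm-ricci-flat} to the case $r=2$ and choose the base $M_2$ in two different diffeomorphism classes that are, however, homeomorphic. By the two theorems, for any compact Einstein manifold $M_2$ of positive scalar curvature, the trivial rank $2$ vector bundle $\R^2 \times M_2$ carries both a non-trivial complete steady gradient Ricci soliton metric and a complete Ricci-flat metric. The non-triviality of the soliton metric (i.e.\ that it is not an Einstein metric) follows from Remark \ref{ricci}, since the asymptotic scalar curvature ratio is $+\infty$ whereas a Ricci-flat metric has vanishing scalar curvature. So it suffices to exhibit, in each of the listed dimensions, two compact, positive scalar curvature Einstein manifolds $M_2$ and $M_2^{\prime}$ of dimension $q=\dim M_2$ which are homeomorphic but such that $\R^2 \times M_2$ and $\R^2 \times M_2^{\prime}$ are not diffeomorphic.

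For the total dimension $4m+3$ with $m\notin\{0,1,3,7,15,31\}$, I would take $q=4m+1$, $M_2^{\prime}=S^q$ with its round metric, and $M_2=\Sigma^q_K$, the Kervaire sphere. Boyer--Galicki--Koll\'ar \cite{BGK1}, \cite{BGK2} equip $\Sigma^q_K$ with Sasakian--Einstein metrics of positive scalar curvature, and the Hill--Hopkins--Ravenel solution to the Kervaire invariant problem \cite{HHR} implies that $\Sigma^q_K$ is not diffeomorphic to $S^q$ for these values of $m$. The Kervaire sphere is by construction a homotopy sphere that bounds a parallelizable manifold, and it is homeomorphic to $S^q$ (for $q\geq 5$, by topological rigidity of the standard sphere). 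Kawakubo--Schultz's Theorem 1 in \cite{KS} then gives that $\R^2 \times \Sigma^q_K$ is not diffeomorphic to $\R^2 \times S^q$, while it is obviously homeomorphic to it. Combining with the existence results above yields both claims of the corollary in these dimensions.

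For the three sporadic dimensions $9$, $13$, $17$, I would make the corresponding choice with $q\in\{7,11,15\}$, taking $M_2$ to be a non-standard homotopy $q$-sphere that bounds a parallelizable manifold and that admits a Sasakian--Einstein metric of positive scalar curvature, as provided by the Boyer--Galicki--Koll\'ar construction in these specific dimensions. The Kawakubo--Schultz theorem then applies in the same way to produce a pair $\R^2\times M_2$ and $\R^2\times S^q$ of homeomorphic but non-diffeomorphic manifolds, each carrying both a complete Ricci-flat and a non-Einstein complete steady gradient Ricci soliton metric.

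No genuine new argument is required; the proof is an assembly of cited results. The only point that needs care is matching dimensions and checking applicability: one must verify that in each case the chosen Einstein factor has positive scalar curvature (so that Theorems \ref{mainthm} and \ref{thm-ricci-flat} apply), that the non-standard homotopy sphere in question bounds a parallelizable manifold (so that \cite{KS} applies, with the Kervaire sphere this is automatic), and that the homotopy sphere is actually non-standard in the relevant dimension (this is the content of \cite{HHR} for Kervaire spheres, and of the classical computations of $\Theta_q$ for the sporadic cases). The main conceptual subtlety, if any, is the invocation of \cite{KS} to promote the non-diffeomorphism from $\Sigma^q \not\cong S^q$ to $\R^2\times\Sigma^q \not\cong \R^2\times S^q$, which is essential because stabilization by a Euclidean factor generally destroys exotic smooth structures.
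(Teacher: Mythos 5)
Your proposal is correct and follows essentially the same route as the paper: specialize Theorems \ref{mainthm} and \ref{thm-ricci-flat} to $r=2$ with $M_2$ a Kervaire sphere (or a sporadic exotic sphere in dimensions $7,11,15$) carrying a Boyer--Galicki--Koll\'ar Sasakian--Einstein metric, use Hill--Hopkins--Ravenel to see it is non-standard, and invoke Theorem 1 of \cite{KS} to conclude $\R^2\times\Sigma$ is not diffeomorphic to $\R^2\times S^q$. The only addition beyond the paper's argument is your explicit remark on why the solitons are non-Einstein, which is a harmless elaboration.
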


We are not aware of examples of this type in the literature, although they presumably
occur among Calabi-Yau manifolds. The soliton case is somewhat
surprising in view of the comparatively greater rigidity of the soliton equations.

\section{\bf Numerical examples}

In this section we shall produce some numerical solutions of the equations
(\ref{TT})-(\ref{TS}). We begin with the case of triple warped products
for which we obtained an existence proof in \S 3. The purpose here is to
see how quickly the characteristic asymptotics of steady gradient Ricci solitons
given in \S 1 and \S 3 exhibit themselves. We then produce numerical solutions in two
cases for which we do not yet have an analytic existence proof. The principal orbits
in these cases are the twistor space of quaternionic projective space (viewed
as a quaternionic K\"ahler manifold) and the total space of the corresponding
${\rm Sp}(1)$ bundle. For these examples, the numerics seem to indicate strongly
that complete steady soliton structures do exist.

The procedure we use is the same as that described in \cite{DHW}, \S 5. First,
a series solution is generated for these equations in a neighbourhood of
the singular orbit satisfying the appropriate smoothness conditions. We then
truncate the series and use the values of the resulting functions at some small
$t_0 > 0$ as initial values to generate solutions of the equations for $t > t_0$
via a fourth order Runge-Kutta scheme. This procedure is necessary because
the cohomogeneity one gradient  Ricci soliton equation has an irregular singular point
at $t=0$, the position of the singular orbit. Since non-trivial steady solitons are
necessarily non-compact, we rely on checks against the expected asymptotic behaviour
(cf \S 1) to ascertain that the numerical solutions indeed point to an actual solution.

\medskip

\noindent{\bf A. Triple Warped Product Case:}

\medskip

Taking $r=3$ for a metric of the form (\ref{metric}), the equations (\ref{TT}), (\ref{SS})
and the conservation law (\ref{cons1}) become the system

\begin{eqnarray*} \label{tripleEQ}
\dot{z_1} & = & z_2  \\
\dot{z_2} & = & -d_2 \frac{z_2 z_4}{z_3} - d_3 \frac{z_2 z_6}{z_5} + z_2 z_8 + \frac{\epsilon}{2}z_1 \\
\dot{z_3} & = & z_4 \\
\dot{z_4} & = & -(d_2-1)\frac{z_4^2}{z_3} - \frac{z_2 z_4}{z_1} -d_3 \frac{z_4 z_6}{z_5} + z_4 z_8
        + \frac{d_2 -1}{z_3} + \frac{\epsilon}{2}z_3 \\
\dot{z_5} & = & z_6  \\
\dot{z_6} & = &  -(d_3-1)\frac{z_6^2}{z_5} - \frac{z_2 z_6}{z_1} -d_2 \frac{z_4 z_6}{z_3} + z_6 z_8
        + \frac{d_3 -1}{z_5} + \frac{\epsilon}{2}z_5 \\
\dot{z_7} & = & z_8 \\
\dot{z_8} & = & -z_8 \left( \frac{z_2}{z_1} + d_2 \frac{z_4}{z_3} + d_3 \frac{z_6}{z_5} \right)
            + z_8^2 + \epsilon z_7 + C
\end{eqnarray*}
where $(z_1, \ldots, z_8) :=(g_1, \dot{g_1}, g_2, \dot{g_2}, g_3, \dot{g_3}, u, \dot{u})$ and we have
chosen $\lambda_2 = d_2 -1, \lambda_3 = d_3 -2$.

For steady solitons, we set $\epsilon = 0$ and we may rule out homothetic solitons by
choosing $C$ to be $-1$. Recall from \cite{DHW}, \S 2 that the smoothness conditions
together with the conservation law (\ref{cons1}) yield the general relation
\begin{equation} \label{initial-u}
  (d_1 + 1)\; \ddot{u}(0) = C + \epsilon u.
\end{equation}
Since $d_1 = 1$ and $\epsilon = 0$, we can further set $u(0)= 0$, and (\ref{initial-u}) becomes
 $\ddot{u}(0) = -\frac{1}{2}$. The initial values of $(z_1, \ldots, z_8)$ are then
 $(0, 1, a, 0, b, 0, 0, 0)$ with $a, b > 0$. (Recall that Theorem \ref{mainthm} gives a
 $2$-parameter family of solutions.) The stepsize in the Runge-Kutta scheme is $0.001$,
 so that the accumulated error is of the order $0.001^4$.

\medskip

\noindent{\bf Example 1.} \: We take $d_2=2$ and $d_3=3$, so the cohomogeneity
one manifold has dimension $7$. We include below two sketches of the solution with
initial values $a=6$ and $b=3$. Figure $1$ shows the functions $g_i$ and $u$ over
the range $0 \leq t \leq 10$, and Figure 2 does it for the range $0 \leq t \leq 500$.
Notice that the slope of the potential $u$ quickly becomes approximately equal
to $-1=-\sqrt{-C}$, cf Prop \ref{steady-MC}. The function $g_1$ also quickly
becomes approximately  constant. The limiting value can be made close to $0$ by
 making $a, b$ small. It also appears that regardless of the values of $a, b$,
 this limiting value is bounded above by $2$.

\begin{figure}[hbtp]
\includegraphics[width=15cm,height=10cm]{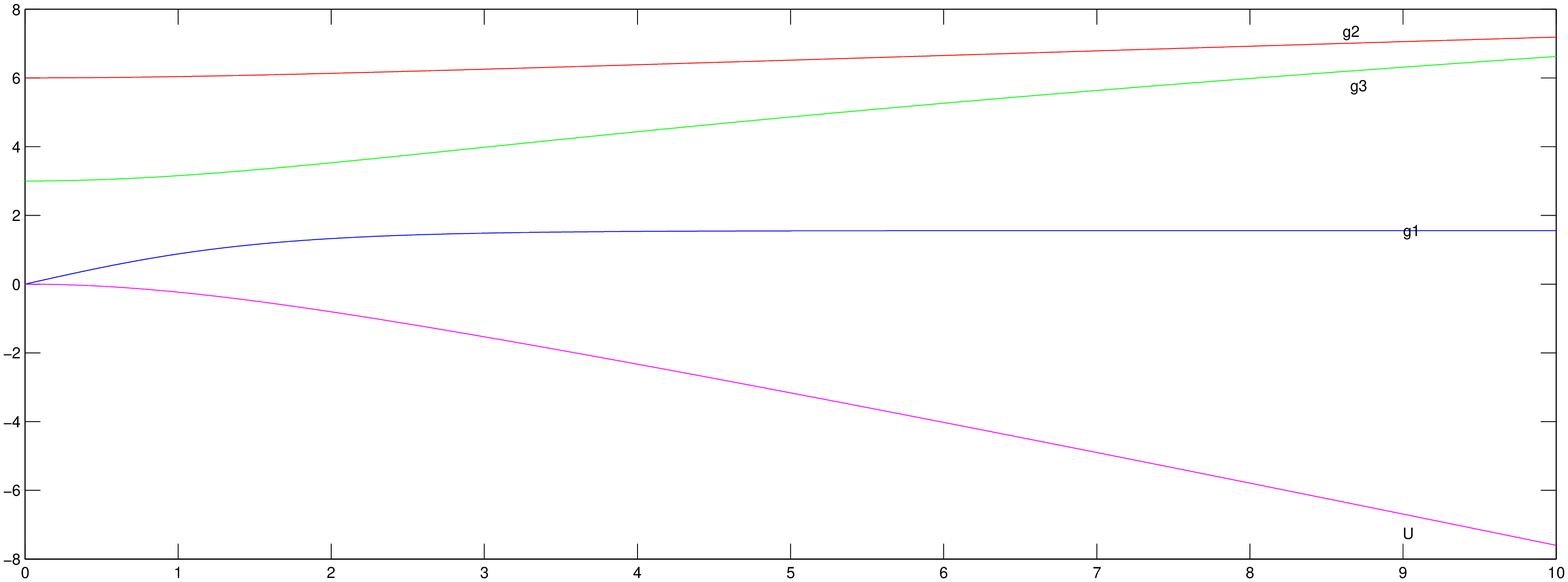}
\caption{}
\end{figure}

\pagebreak

\begin{figure}[hbtp]
\includegraphics[width=15cm,height=8cm]{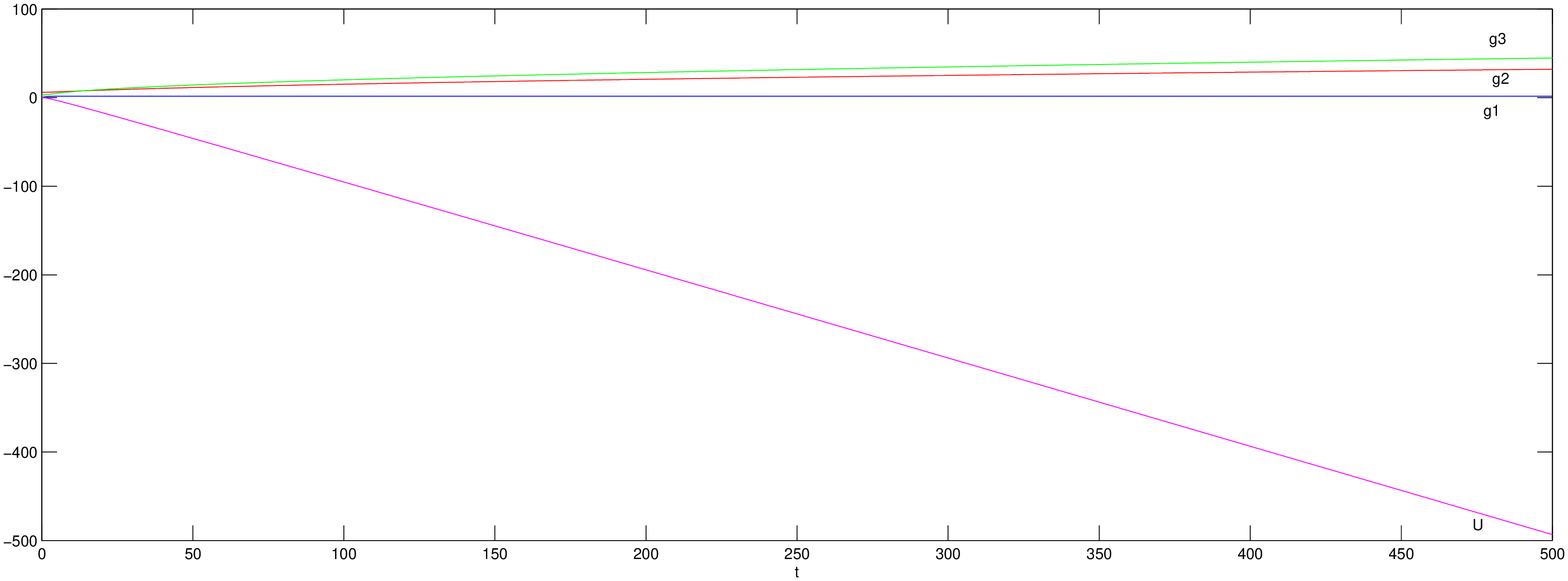}
\caption{}
\end{figure}

We also plot the quantities $ {\tilde X}_i := \frac{X_i}{\sqrt{d_i}},
\;\;\;  {\tilde Y}_i:=\frac{Y_i}{\sqrt{d_i}}$ against $t$ respectively
in Figures 3 and 4. Recall that the proof of Theorem \ref{asymptotics}
shows that ${\tilde Y}_1$ tends to a positive constant while the remaining
quantities tend to zero.

\begin{figure}[hbtp]
\includegraphics[width=15cm,height=9cm]{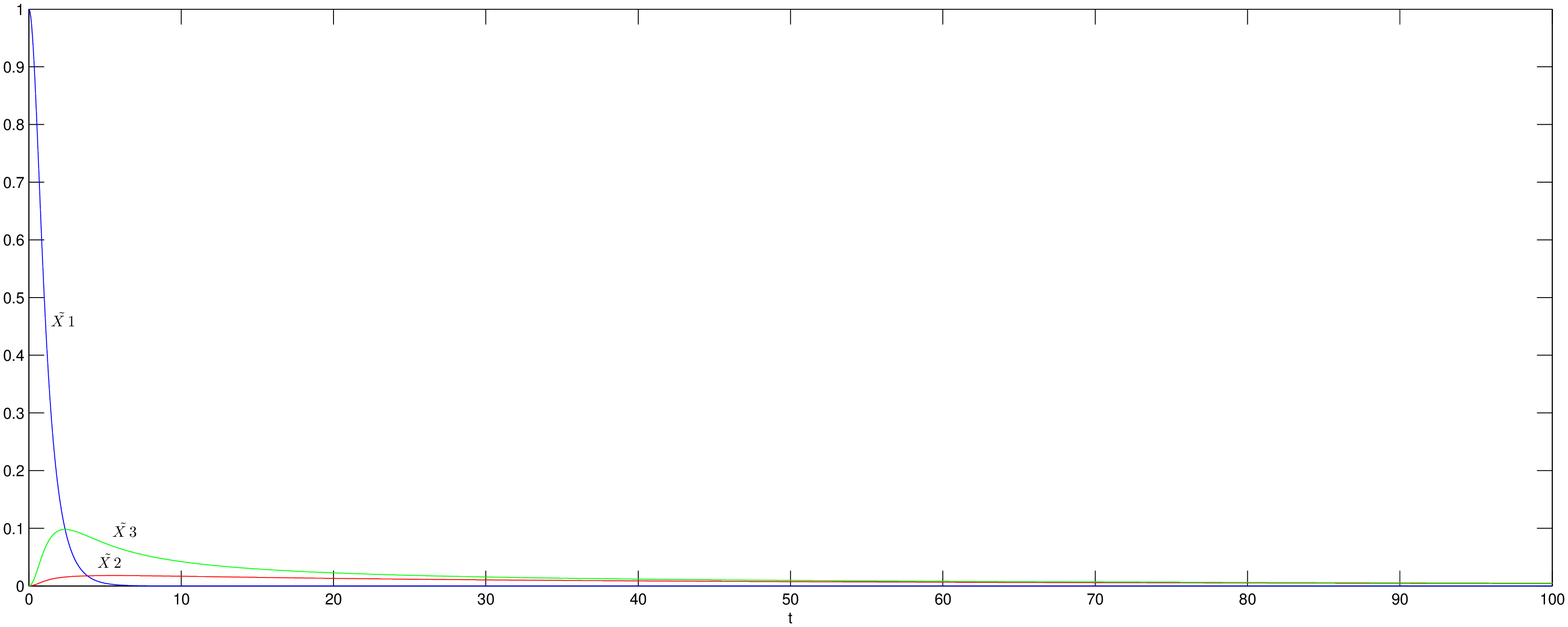}
\caption{}
\end{figure}

\begin{figure}[hbtp]
\includegraphics[width=15cm,height=10cm]{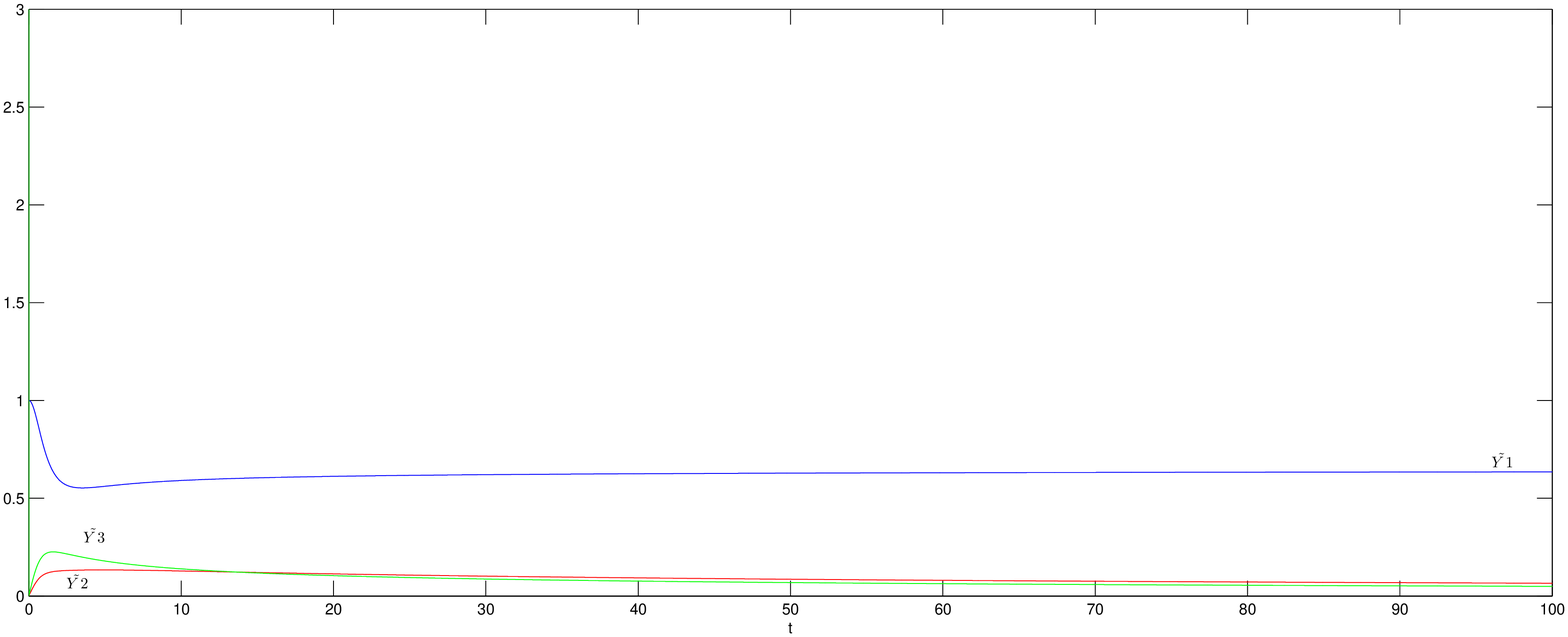}
\caption{}
\end{figure}

\pagebreak

\noindent{\bf B. Some Two-summands Cases:}

\medskip

We consider next a principal orbit $G/K$ whose isotropy representation consists of two
inequivalent ${\rm Ad}(K)$-invariant irreducible real summands. We assume that
$K \subset H \subset G$ where $H, K$ are closed subgroups of the compact Lie group $G$
such that $H/K$ is a sphere. We choose a $G$-invariant background metric $\mathsf b$ on $G/K$
such that it induces the constant curvature $1$ metric on $H/K$. Our cohomogeneity one
manifold $M$ is then the vector bundle $G \times_{H} \R^{d_{1}+1}$ where $H/K \subset \R^{d_{1}+1}$
is regarded as the unit sphere.

Let $\g = \kf \oplus \p$ be an ${\rm Ad}(K)$-invariant decomposition of the Lie algebra
of $G$. Then $\p$ is identified with the tangent space of $G/K$ at the base point.
We can further decompose $\p$ as $\p_1 \oplus \p_2$, where $\p_i$ are $\mathsf b$-orthogonal
irreducible ${\rm Ad}(K)$-representations of dimensions $d_i$. Note that $\p_1$ is
the tangent space to the sphere $H/K$ at the base point, and $\p_2$ is the tangent
space of the singular orbit $G/H$ at the base point. Our metrics of cohomogeneity one
then take the form
$$ \bar{g} = dt^2 + g_1(t)^2 \; {\mathsf b}|\p_1 + g_2(t)^2 \; {\mathsf b}|\p_2.$$

As is well-known, the Ricci operator of the metric $g(t)$ on $G/K$ has corresponding
components
$$ r_1 = \frac{A_1}{d_1} \frac{1}{g_1^2} + \frac{A_3}{d_1} \frac{g_1^2}{g_2^4} $$
$$ r_2 = \frac{A_2}{d_2} \frac{1}{g_2^2} - \frac{2A_3}{d_2} \frac{g_1^2}{g_2^4}. $$
As a result of our choice of $\mathsf b$, one has $A_1=d_1(d_1-1)$, and the constants $A_2, A_3$
are analogous (but not always equal) to the quantities $c_Q$ and $\|{\mathcal A}\|$ in \cite{DHW}.

Taking $(z_1, \ldots, z_6):=(g_1, \dot{g_1}, g_2, \dot{g_2}, u, \dot{u})$ as before, the
soliton equations become
\begin{eqnarray*} \label{2summands}
\dot{z_1} & = & z_2  \\
\dot{z_2} & = & -(d_1 -1)\frac{z_2^2}{z_1} -d_2 \frac{z_2 z_4}{z_3} + z_2 z_6 + \frac{d_1 -1}{z_1}
            + \frac{A_3}{d_1} \frac{z_1^3}{z_3^4} + \frac{\epsilon}{2}z_1 \\
\dot{z_3} & = & z_4 \\
\dot{z_4} & = & -d_1 \frac{z_2 z_4}{z_1} -(d_2-1)\frac{z_4^2}{z_3} + z_4 z_6
        + \frac{A_2}{d_2} \frac{1}{z_3} -2\frac{A_3}{d_2} \frac{z_1^2}{z_3^3} + \frac{\epsilon}{2}z_3 \\
\dot{z_5} & = & z_6 \\
\dot{z_6} & = & -z_6 \left( d_1 \frac{z_2}{z_1} + d_2 \frac{z_4}{z_3} \right)
            + z_6^2 + \epsilon z_5 + C,
\end{eqnarray*}
with $\epsilon=0$ and $C=-1$.

\medskip

\noindent{\bf Example 2.} \: We set $G={\rm Sp}(m+1),  H={\rm Sp}(m) \times {\rm Sp}(1),$
and $K={\rm Sp}(m) \times {\rm U}(1)$. The principal orbit $G/K$ is diffeomorphic to $\C{\PP}^{2m+1}$
and the singular orbit $G/H$ is $\HH{\PP}^m$. So $d_1=2, d_2 = 4m$, and $A_2 = 2m(m+2), A_3=\frac{m}{2}$
(where $\mathsf b$ is given by $-2\tr(XY)$). As before we set $u(0)=0$ and (\ref{initial-u}) becomes
$\ddot{u}(0) = -\frac{1}{3}$. The initial values of $(z_1, \ldots, z_6)$ are given by
$(0, 1, \bar{h}, 0, 0, 0)$ where $\bar{h} > 0.$

We plot the functions $g_i$ and $u$ for the $m=1$ and $m=2$ cases respectively
in Figures $5$ and $6$. Recall that the dimensions of the cohomogeneity one
manifolds are respectively $7$ and $11$ for the $m=1$ and $m=2$ cases.
The initial value $\bar{h}$ is taken to be $6$ in both cases. Again the soliton
potential very quickly becomes approximately linear with slope close to $-1$.
The growth of the functions $g_i$ is consistent with being  like
$\sqrt{t}$ asymptotically.

\pagebreak

\begin{figure}[hbtp]
\includegraphics[width=15cm,height=9cm]{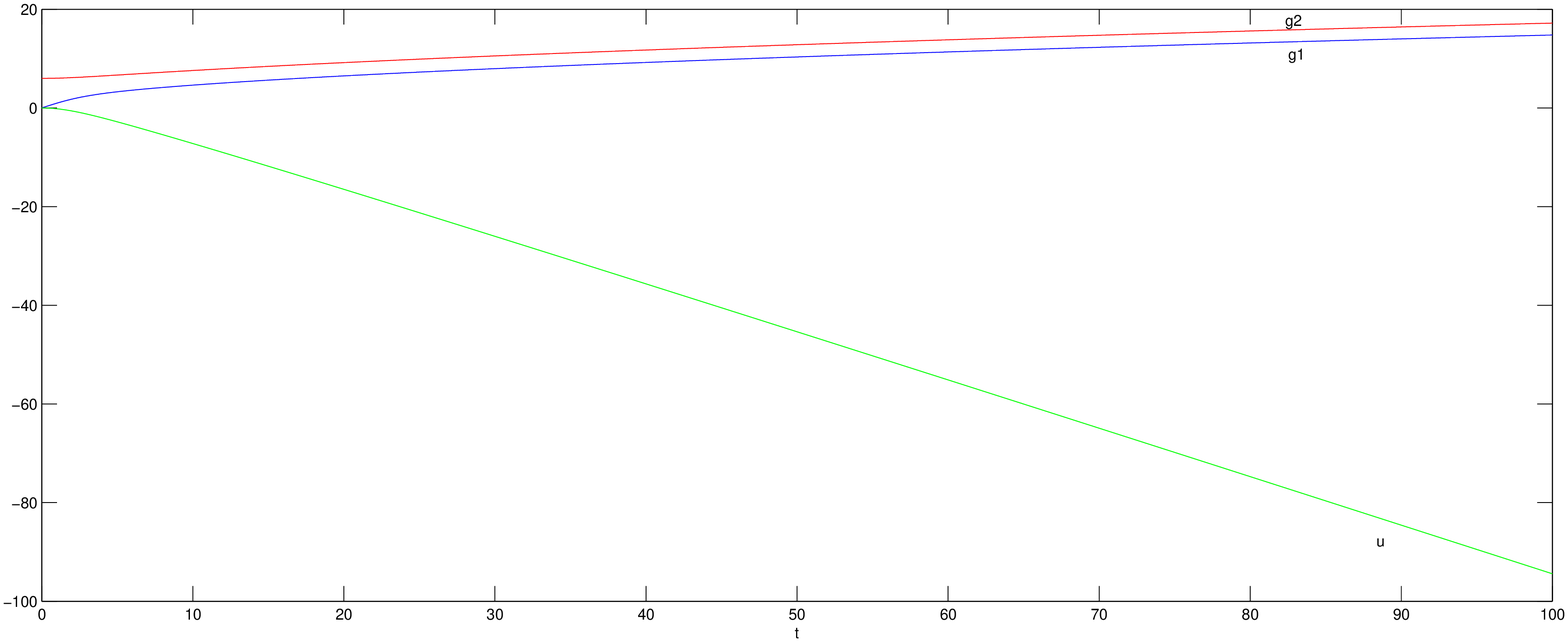}
\caption{}
\end{figure}

\begin{figure}[hbtp]
\includegraphics[width=15cm,height=9cm]{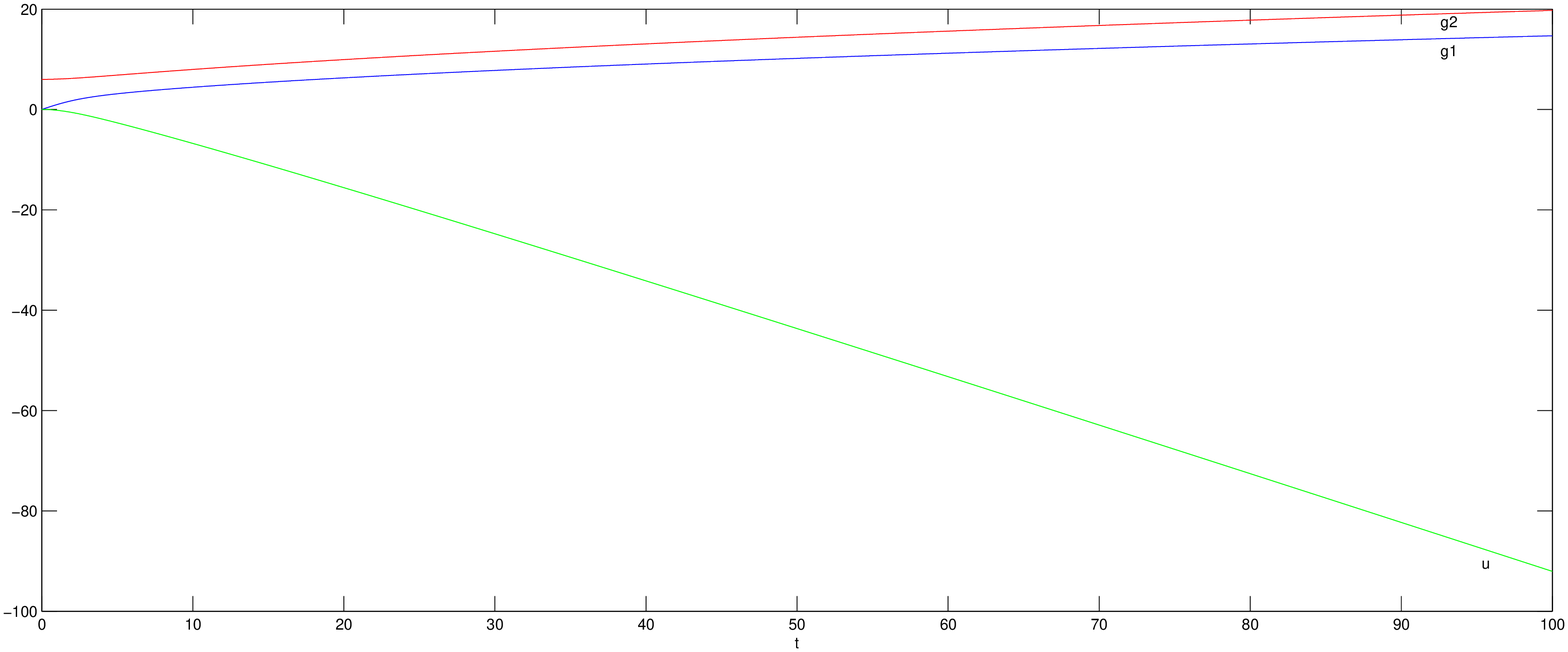}
\caption{}
\end{figure}

When the isotropy representation of the principal orbit is multiplicity free,
we can still define the new variables $s$ and $X_i, Y_i$ as in \S 2.
We again let ${\tilde X_i}= X_i/\sqrt{d_i}$ and ${\tilde Y_i}= Y_i/\sqrt{d_i}$.
Now we can plot the quantities ${\tilde X}_i, {\tilde Y}_i$ against $t$. Figure $7$
corresponds to the $m=1$ case and Figure $8$ to the $m=2$ case. All
these quantities appear to tend to zero, but the ${\tilde X}_i$ do so faster.
Note that since by Proposition \ref{steady-MC} the quantity $\xi$ tends to $1$,
the fact that the ${\tilde X}_i$ appear to tend to zero very likely indicates
that the shape operator $L$ also tends to $0$.

\begin{figure}[hbtp]
\includegraphics[width=17cm,height=9cm]{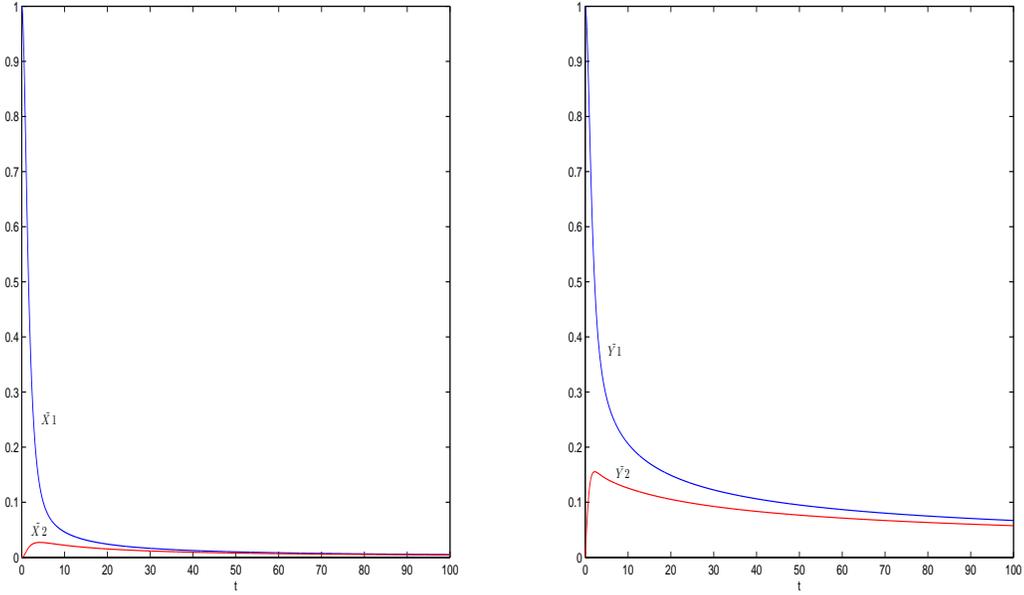}
\caption{${\tilde X_i}, {\tilde Y_i}$ for $m=1$}
\end{figure}

\begin{figure}[hbtp]
\includegraphics[width=17cm,height=9cm]{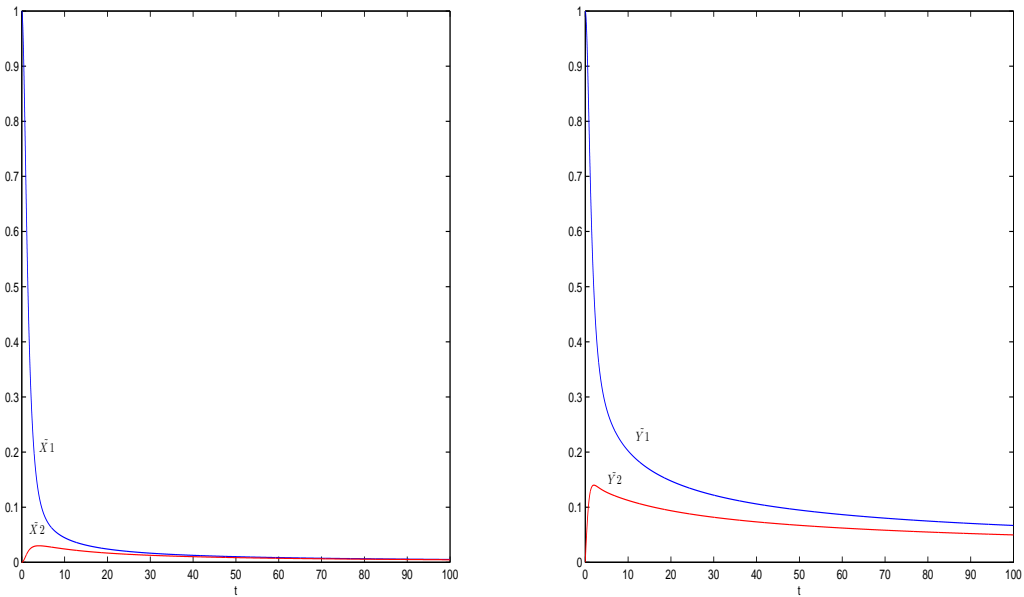}
\caption{${\tilde X_i}, {\tilde Y_i}$ for $m=2$}
\end{figure}

We can also plot the ratios ${\tilde X}_1/{\tilde X_2}=\frac{\dot{g_1}}{g_1} \frac{g_2}{\dot{g_2}}$
 and ${\tilde Y}_1/{\tilde Y}_2 = \frac{g_2}{g_1}$ against $t$. For the $m=1$ case this is
shown in Figure 9. The first ratio appears to tend to $1$, which likely indicates that the
principal curvatures of the principal orbits asymptotically become equal. It is not as
convincing (from considering other values of $m$) that the second ratio tends to
$1$, although it does appear to tend to some limit.

\begin{figure}[hbtp]
\begin{center}
\includegraphics[width=16cm,height=8cm]{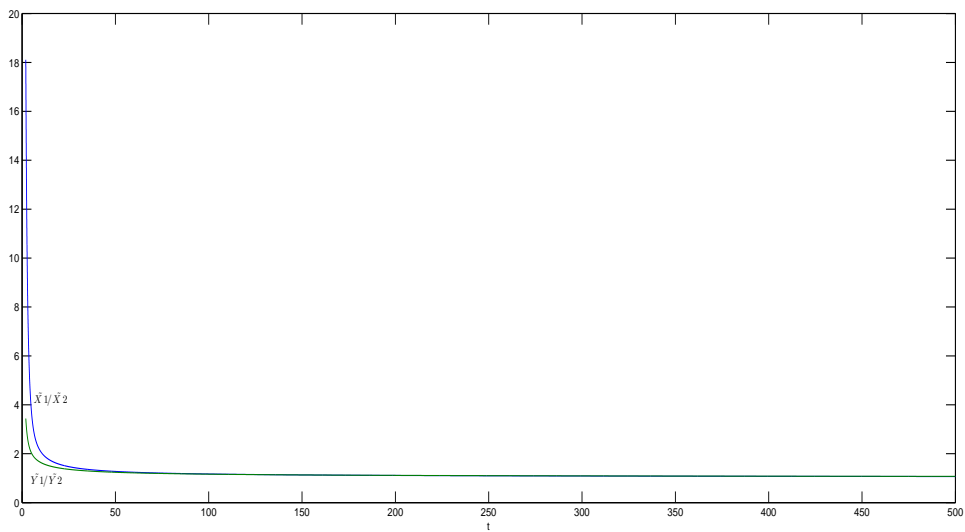}
\caption{${\tilde X_1}/{\tilde X_2}$ and ${\tilde Y_1}/{\tilde Y_2}$ for $m=1$}
\end{center}
\end{figure}

Similar numerical results hold for larger values of $m$.

\medskip

\noindent{\bf Example 3.} \: We set  $G={\rm Sp}(m+1) \times {\rm Sp}(1),
H={\rm Sp}(m) \times {\rm Sp}(1) \times {\rm Sp}(1),$ and $K={\rm Sp}(m) \times \Delta{\rm Sp}(1)$.
The principal orbit $G/K$ is diffeomorphic to $S^{4m+3}$ and the singular orbit $G/H$ is
again $\HH{\PP}^m$. So $d_1=3, d_2 = 4m$, and $A_2 = 4m(m+2), A_3=\frac{3m}{4}$
(where $\mathsf b$ is given by $-\tr(XY)$ on both the ${\rm Sp}(m+1)$ and ${\rm Sp}(1)$
factors). As before we set $u(0)=0$ and (\ref{initial-u}) becomes
$\ddot{u}(0) = -\frac{1}{4}$. The initial values of $(z_1, \ldots, z_6)$ are given by
$(0, 1, \bar{h}, 0, 0, 0)$ where $\bar{h} > 0.$

For this case we obtain graphs very similar to those in Example 2.

\medskip

Based on the last two examples, we would conjecture that on the
vector bundles $G \times_H \R^{d_1+1},$ where $(G, H,  K)$ are as above, there is a
$1$-parameter family of non-homothetic complete steady gradient Ricci solitons.
We hope to pursue this and related questions in subsequent work.

\end{document}